\documentclass[10pt]{article}
\usepackage{amsmath,amssymb,amsthm}
\usepackage{epsfig}
\usepackage{color}

\title{Signed edge domination numbers of complete tripartite graphs: Part 2}
\author {
\begin{tabular}{c}
Abdollah Khodkar\\
Department of Mathematics\\
University of West Georgia\\
Carrollton, GA 30118\\
{\tt akhodkar@westga.edu}
\end{tabular}
}

\date{}

\setlength{\textwidth}{114truemm}
\setlength{\textheight}{181truemm}

\setlength{\parindent}{5.0truemm}

\newtheorem{prelem}{{\bf Theorem}}

 \newtheorem{theorem}{Theorem}
\newtheorem{corollary}[theorem]{Corollary}
\newtheorem{lemma}[theorem]{Lemma}

\theoremstyle{definition}

\theoremstyle{remark}

\begin{document}
\maketitle

\begin{abstract}
\noindent The closed neighborhood $N_G[e]$ of an edge $e$ in a graph $G$ is
the set consisting of $e$ and of all edges having an end-vertex in
common with $e$.  Let $f$ be a function on $E(G)$, the edge set of
$G$, into the set $\{-1,1\}$. If
$\sum_{x\in{N[e]}}f(x)\geq 1$ for each edge $e \in E(G)$, then $f$
is called a signed edge dominating function of $G$.
The signed edge domination number of $G$ is
the minimum weight of a signed edge dominating function of $G$.
In this paper, we find the signed edge
domination number of the complete tripartite
graph $K_{m,n,p}$, where $1\leq m\leq n$ and $p\geq m+n$.
This completes the search for the signed edge domination numbers of the complete tripartite graphs.
\vspace{3mm}\\
{\bf Keywords:} domination in graphs, edge domination, signed edge domination\\
{\bf MSC 2000}: 05C69
\end{abstract}

\section{Introduction}\label{SEC1}

Let $G$ be a simple non-empty graph with vertex set $V(G)$ and edge
set $E(G)$.  We use \cite{W} for terminology and notation not
defined here. Two edges $e_1$, $e_2$ of $G$ are called {\em
adjacent} if they are distinct and have a common end-vertex.  The
{\em open neighborhood} $N_G(e)$ of an edge $e\in E(G)$ is the set
of all edges adjacent to $e$.  Its {\em closed neighborhood} is
$N_G[e] = N_G(e)\cup\{e\}$. For a function $f : E(G) \rightarrow
\{-1,1\}$ and a subset $S$ of $E(G)$ we define $f(S) = \sum_{x\in
S}f(x)$.  If $S = N_{G}[e]$ for some $e\in E$, then we denote $f(S)$
by $f[e]$.  The {\em weight} of vertex $v \in V(G)$ is defined by
$f(v) = \sum_{e\in E(v)}f(e)$, where $E(v)$ is the set of all edges at
vertex $v$. A function $f : E(G)\rightarrow \{-1,1\}$ is called a {\em signed edge dominating
function} (SEDF) of $G$ if $f[e] \geq 1$ for each edge $e \in
E(G)$. The SEDF of a graph was first defined in \cite{Xu1}.
The {\em weight} of $f$, denoted $w(f)$, is defined to be
$w(f)=\sum_{e\in E(G)}f(e)$. {\em The signed edge domination
number} (SEDN) $\gamma'_{s}(G)$ is defined as $\gamma'_{s}(G)$ =
min\{$w(f) \mid f$ is an SEDF of $G$\}. An SEDF $f$ is called a $\gamma'_{s}(G)$-function if
$\omega(f)=\gamma'_{s}(G)$.
In \cite{Xu1} it was conjectured that $\gamma_s'(G)\leq |V(G)|-1$ for every graph $G$ of order at least 2.

The signed edge domination numbers of the complete graph $K_n$ and
the complete bipartite graph $K_{m,n}$ were
determined in \cite{Xu2} and \cite{ABHS}, respectively.
In \cite{KG}, the signed edge domination number of
$K_{m,n,p}$ was calculated when $1\leq m\leq n\leq p\leq m+n$.
For completeness, we state the main theorem of \cite{KG}.

\begin{theorem}\label{main1}
{\rm
Let $m,n$ and $p$ be positive integers and $m\leq n\leq p\leq m+n$.
Let $(m,n,p)\not\in \{(1,1,1),$ $(2,3,5)\}$.
\begin{itemize}
\item[{\bf A.}] Let $m$, $n$ and $p$ be even.
   \begin{enumerate}
     \item If $m+n+p\equiv 0$ (mod 4), then $\gamma_s'(K_{m,n,p})=(m+n+p)/2$.

    \item If $m+n+p\equiv 2$ (mod 4), then $\gamma_s'(K_{m,n,p})=(m+n+p+2)/2$.
   \end{enumerate}

\item[{\bf B.}] Let $m,n$ and $p$ be odd.
    \begin{enumerate}
      \item If $m+n+p\equiv 1$ (mod 4), then $\gamma_s'(K_{m,n,p})=(m+n+p+1)/2$.

     \item If $m+n+p\equiv 3$ (mod 4), then $\gamma_s'(K_{m,n,p})=(m+n+p+3)/2$.
    \end{enumerate}

\item[{\bf C.}] Let $m,n$ be odd and $p$ be even or $m,n$ be even and $p$ be odd.
    \begin{enumerate}
      \item If $m+n\equiv 0$ (mod 4), then $\gamma_s'(K_{m,n,p})=(m+n)/2+p+1$.
      \item If $m+n\equiv 2$ (mod 4), then $\gamma_s'(K_{m,n,p})=(m+n)/2+p$.
    \end{enumerate}

\item[{\bf D.}] Let $m,p$ be odd and $n$ be even or $m,p$ be even and $n$ be odd.
   \begin{enumerate}
    \item If $m+p\equiv 0$ (mod 4), then $\gamma_s'(K_{m,n,p})=(m+p)/2+n+1$.
    \item If $m+p\equiv 2$ (mod 4), then $\gamma_s'(K_{m,n,p})=(m+p)/2+n$.
   \end{enumerate}

\item[{\bf E.}] Let $n,p$ be odd and $m$ be even or $n,p$ be even and $m$ be odd.
   \begin{enumerate}
    \item If $n+p\equiv 0$ (mod 4), then $\gamma_s'(K_{m,n,p})=(n+p)/2+m+1$.
    \item If $n+p\equiv 2$ (mod 4), then $\gamma_s'(K_{m,n,p})=(n+p)/2+m$.
   \end{enumerate}
\end{itemize}
In addition, $\gamma_s'(K_{1,1,1})=1$ and $\gamma_s'(K_{2,3,5})=5$.
}
\end{theorem}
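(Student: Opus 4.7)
My plan is to prove matching upper and lower bounds, one pair per case. Write the three parts of $K_{m,n,p}$ as $A,B,C$ of sizes $m,n,p$, and let $w_{AB},w_{AC},w_{BC}$ denote the edge-weight sums inside each bipartite layer, so $w(f)=w_{AB}+w_{AC}+w_{BC}$. Set $F_X=\sum_{v\in X}f(v)$; then the standard double-count gives $F_A+F_B+F_C=2w(f)$.

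For the lower bound, the starting identity is
\[
\sum_{e\in E(K_{m,n,p})} f[e] \;=\; (n+p)\,F_A+(m+p)\,F_B+(m+n)\,F_C-w(f),
\]
obtained by counting, for each edge $x$, how many closed neighborhoods it lies in (namely $|N[x]|=d(u)+d(v)-1$). The key sharpening is the parity observation that $f[e]$ and $|N[e]|$ share parity, so $f[e]\ge 2$ whenever $|N[e]|$ is even. In $K_{m,n,p}$, $|N[ab]|$ has the parity of $m+n-1$, $|N[ac]|$ of $m+p-1$, and $|N[bc]|$ of $n+p-1$. This is precisely the data that separates the five cases: Case~A (all three pair-sums even) keeps $f[e]\ge 1$ across the board; Case~B (all odd) upgrades \emph{every} edge to $f[e]\ge 2$; and Cases~C,~D,~E correspond to exactly two of the three pair-sums being odd. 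Feeding these parity-sharpened bounds into the identity above, together with the complementary per-vertex inequality
\[
(d(v)-1)\,f(v)+\sum_{u\ne v,\;u\sim v} f(u)\;\ge\; d(v)
\]
(obtained by summing $f[uv]\ge 1$ over $u\sim v$), yields the claimed lower bound in each subcase. The $\bmod\,4$ congruences in the hypothesis enter by fixing the parity of $w(f)$ through the cardinalities of the edge-classes, which in turn forces the $+1$ adjustment in the tighter subcases.

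For the upper bound, I would construct, for each subcase, an explicit SEDF $f$ whose weight meets the claimed value. The natural construction treats the three bipartite layers $K_{m,n}, K_{m,p}, K_{n,p}$ in concert: within each layer one selects a $\{\pm 1\}$ matrix whose row and column sums are as small in absolute value as the parity allows, in the spirit of the extremal signings of \cite{ABHS} for the bipartite case, and then aligns the three matrices so that for every edge $e=uv$ the vertex sums satisfy $f(u)+f(v)\ge 1+f(e)$. Summing $w_{AB}+w_{AC}+w_{BC}$ then recovers the target value.

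The principal obstacle is \emph{coordinating} the three bipartite signings: the closed neighborhood of an $AB$-edge $ab$ contains not only other $AB$-edges but every $AC$-edge through $a$ and every $BC$-edge through $b$, so the row-sums of the $AC$-layer at $a$ and the column-sums of the $BC$-layer at $b$ must cooperate with the $AB$-layer. In the tighter subcases only one or two feasible alignments survive, and verifying that at least one such alignment exists --- across all combinations of parities of $m,n,p$ --- is the combinatorial core of the argument. The two exceptional triples $(1,1,1)$ and $(2,3,5)$ are finally handled by direct inspection, since the generic parametric construction fails at those small sizes.
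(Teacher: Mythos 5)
First, a point of order: the paper in front of you does not prove this statement at all. Theorem~\ref{main1} is quoted verbatim from Part One (\cite{KG}) ``for completeness,'' and the present paper only treats the complementary range $p\ge m+n$. So there is no in-paper proof to compare your attempt against; I can only assess the attempt on its own terms.

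On its own terms, the proposal contains one outright error and one structural gap, in addition to being a plan rather than a proof. The error is the parity claim for Case~B: if $m,n,p$ are all odd, then all three pair-sums $m+n$, $m+p$, $n+p$ are \emph{even}, so $|N[e]|=d(u)+d(v)-1$ is odd for every edge and the parity sharpening gives nothing beyond $f[e]\ge 1$ --- exactly as in Case~A. (Indeed the three pair-sums can never all be odd, since they sum to $2(m+n+p)$, so the situation ``every edge upgraded'' never occurs.) The distinction between Cases~A and~B therefore cannot come from the mechanism you assign it to; in arguments of this kind it comes from the parities of the vertex weights $f(v)$ and a count of negative edges per vertex. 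The structural gap is in the lower bound: your identity bounds the weighted sum $(n+p)F_A+(m+p)F_B+(m+n)F_C$ from below, whereas $w(f)=\tfrac{1}{2}(F_A+F_B+F_C)$, and passing from the former to the latter requires lower bounds on individual vertex weights (otherwise a very negative $F_C$, carrying the small coefficient $m+n$, can be compensated by the other terms without controlling $w(f)$). Establishing such bounds --- e.g.\ that every vertex of the largest part has weight at least $0$ or $-1$ for a minimizing $f$ --- is precisely the technical heart of this line of work (compare Lemmas~\ref{eee.ooo}--\ref{oeo.eoo} and Corollary~\ref{equal_minus_multipartite} of the present paper), and your sketch does not supply it. Finally, the upper bound is only described in outline; you yourself flag the alignment of the three bipartite signings as ``the combinatorial core,'' and until explicit SEDFs meeting each of the ten stated values (plus the two exceptional triples) are exhibited and verified, the theorem is not proved.
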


In this paper, we find the signed edge
domination number of the complete tripartite
graph $K_{m,n,p}$, when $m,n\geq 1$ and $p\geq m+n$.
In Section \ref{SEC2}, we present some crucial results which will be employed
in the rest of this paper.
In Section \ref{SEC3}, we prove that if $f$ is a $\gamma_s'(K_{m,n,p})$-function,
then $f(w) \geq -1$ for every vertex $w$ in the largest partite set of $K_{m,n,p}$.
In Section \ref{SEC4}, we present general constructions for the SEDFs of
$K_{m,n,p}$ with minimum weight.
In Section \ref{SEC5}, we calculate the signed edge domination numbers of
$K_{1,n,p}$ and $K{2,2,p}$, where $p$ is even. These cases do not follow the constructions given in Section \ref{SEC4}. In addition, we notice that $\gamma_s'(K_{1,n,n+3})=2n+3$ when $n$ is odd. So there is an infinite family of graphs which achieve the upper bound given in Xu's conjecture (see \cite{Xu1}).
The main theorem of this paper is presented in Section \ref{SEC6}.

\section{Preliminary results}\label{SEC2}
Let $f$ be a SEDF of $G$.
An edge $e\in E(G)$ is called a {\em negative edge} ({\em positive edge})
if $f(e)=-1$ ($f(e)=1$). Let $uv$ be an edge of $G$ and
suppose that $x,y$ are the number of negative edges at vertices $u$ and $v$,
respectively. Then
\begin{equation}\label{eq1}
f[uv]=\deg(u)+\deg(v)-2x-2y-f(uv)\geq 1.
\end{equation}
Hence, if $e=uv$, then $f(u)+f(v)\geq 0$ for every edge $e\in E(G)$. In addition, if
$f(u)+f(v)=0$ or $1$, then $f(e)=-1$.

The following result can be found in \cite{CK}. Since there are typos in the proof given in \cite{CK}
we modify the proof and present it here.

\begin{lemma}\label{equal_minus}
{\rm Let $G$ be a graph, $u,v\in V(G)$ and
$N(u)\setminus\{v\}=N(v)\setminus\{u\}$. Let $f$ be a SEDF of
$G$. Then there exists a SEDF of $G$, say $g$, with $w(g)=w(f)$
such that the difference between the number of negative edges at $u$ and at $v$ is at most 1.}
\end{lemma}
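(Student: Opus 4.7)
My plan is to proceed by induction on the quantity $|x-y|$, where $x$ (resp.\ $y$) is the number of negative edges of $f$ incident with $u$ (resp.\ $v$); the base case $|x-y|\le 1$ is trivial, so for the inductive step I assume without loss of generality that $x\ge y+2$. Let $W=N(u)\setminus\{v\}=N(v)\setminus\{u\}$ and, for each $w\in W$, classify the pair $(f(uw),f(vw))$ as type $a,b,c,d$ according to the sign pattern $(-,-),(-,+),(+,-),(+,+)$, with $|a|,|b|,|c|,|d|$ denoting the cardinalities. Since the edge $uv$ (if present) is counted on both sides of the negative tally, $x-y=b-c$, so the hypothesis forces $b\ge c+2$, and in particular $b\ge 2$.

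The elementary operation is the \emph{pair-swap} at a type-$b$ index $i$: set $g(uw_i)=1$, $g(vw_i)=-1$, and $g(e)=f(e)$ for every other edge $e$. Plainly $w(g)=w(f)$, and $g$ reduces $x-y$ by exactly $2$, so the induction closes once I verify that $g$ is an SEDF. A short check on the closed neighborhoods of $uw_i$ and $vw_i$ gives $g[e]-f[e]=0$ for every edge $e$ except that $g[e]-f[e]=+2$ on edges $uz$ with $z\in W\setminus\{w_i\}$ and $g[e]-f[e]=-2$ on edges $vz$ with $z\in W\setminus\{w_i\}$. Hence $g$ is an SEDF if and only if $f[vz]\ge 3$ for every $z\in W\setminus\{w_i\}$.

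The remaining work, which I expect to be the crux of the lemma, is to exhibit a type-$b$ index $i$ for which this inequality holds. I plan to set $S=\{z\in W:f[vz]\le 2\}$ and split into cases on $S$. If $S=\emptyset$ then every type-$b$ index works. If $|S|=1$ and the unique element $z_0\in S$ itself indexes a type-$b$ pair, take $i$ so that $w_i=z_0$; the parity relation $f[vz]\equiv \deg(v)+\deg(z)-1\pmod 2$ together with the bound $b\ge c+2$ should let me handle the borderline values. The delicate situation is $|S|\ge 2$, where no single pair-swap is legal. Here my plan is to precede the pair-swap by a weight-preserving local correction aimed at raising the small values $f[vz]$ for $z\in S$: a natural candidate is to exchange signs on a suitable pair of parallel edges $uz$, $vz$ (using that $b>c$ guarantees a surplus of type-$b$ pairs available for donation), then apply the pair-swap, and finally observe that the composite move is weight-preserving and reduces $|x-y|$ by~$2$ without creating new SEDF violations.

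The main obstacle I anticipate is the last step: certifying that such a compensating exchange always exists and does not ricochet to other edges and break the SEDF condition elsewhere. The argument will lean heavily on the structural assumption $N(u)\setminus\{v\}=N(v)\setminus\{u\}$, which makes the roles of $u$ and $v$ perfectly symmetric and forces the only edges whose $f[\cdot]$ value changes under a $(u,v)$-symmetric local move to be precisely the edges at $u$ and at $v$ analyzed above; this symmetry is what ultimately confines the bookkeeping to a tractable finite case analysis.
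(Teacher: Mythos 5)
Your reduction is sound and is essentially the paper's: you perform the same single-edge swap (moving one negative edge from the overloaded vertex $u$ to $v$), you correctly compute that the only closed neighborhoods whose $f$-value changes are those of edges $uz$ and $vz$ for $z\in W\setminus\{w_i\}$ (by $\pm 2$ respectively), and you correctly identify that the whole lemma comes down to the inequality $f[vz]\ge 3$. But at exactly that point the proof stops: you declare this inequality to be ``the crux,'' introduce the set $S=\{z: f[vz]\le 2\}$, and for $|S|\ge 2$ you only sketch a hoped-for ``compensating exchange'' that you explicitly flag as an unresolved obstacle. As written, this is a genuine gap --- the key step is asserted as a plan, not proved.

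What you are missing is that $S$ is always empty, so no choice of $w_i$ and no auxiliary correction is ever needed. From the identity $f[rs]=\deg(r)+\deg(s)-2n_-(r)-2n_-(s)-f(rs)$ (equation (\ref{eq1}) of the paper, where $n_-$ counts negative edges at a vertex), together with $\deg(u)=\deg(v)$ --- which follows from $N(u)\setminus\{v\}=N(v)\setminus\{u\}$ and which your write-up never actually uses in a computation --- one gets for every $z\in W$ with $uz,vz\in E(G)$:
\begin{equation*}
f[vz]-f[uz]=2\bigl(n_-(u)-n_-(v)\bigr)+f(uz)-f(vz)\ge 2\cdot 2-2=2,
\end{equation*}
hence $f[vz]\ge f[uz]+2\ge 3$. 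This one line is precisely how the paper closes the verification (it checks $g[uz]\ge f[vz]-2\ge 1$ and $g[vz]=f[vz]+2$ directly), after which your induction on $|x-y|$ goes through. So the approach is the right one and the same as the paper's, but the proposal is incomplete until this computation replaces the speculative case analysis on $S$.
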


\begin{proof}
Let $V(G)=\{v_1=u,v_2=v,v_3,\ldots,v_n\}$ and let $x_i$, $1\leq
i\leq n$, be the number of edges $e$ at $v_i$ with $f(e)=-1$.

If $x_1\leq x_2-2$, then there exists a vertex $v_{\ell}$ such that
$f(v_1v_{\ell})=1$ and $f(v_2v_{\ell})=-1$ for some
$\ell\in\{3,4,\ldots,n\}$. Define $g:E(G)\rightarrow \{-1,1\}$ by
$g(v_1v_{\ell})=-1$, $g(v_2v_{\ell})=1$ and $g(e)=f(e)$ for $e\in
E(G)\setminus\{v_1v_{\ell},v_2v_{\ell}\}$. Let $y_i$, $1\leq i\leq
n$, be the number of edges $e$ at $v_i$ with $g(e)=-1$. Obviously,
$y_1=x_1+1$, $y_2=x_2-1$ and $y_i=x_i$ for $3\leq i\leq n$. In
addition, $w(g)=w(f)$. We prove that $g$ is a SEDF of $G$.

If $v_1v_j\in E(G)$ and $j\not\in\{1,2,\ell\}$
$$\begin{array}{lclr}
g[v_1v_j]&=& \deg(v_1)+\deg(v_j)-2y_1-2y_j-g(v_1v_j)&\\
&=&\deg(v_1)+\deg(v_j)-2x_1-2-2x_j-f(v_1v_j)&\\
&\geq &\deg(v_2)+\deg(v_j)-2x_2+2-2x_j-f(v_1v_j)&\\
&\geq &\deg(v_2)+\deg(v_j)-2x_2-2x_j-f(v_2v_j)&\\
&\geq &k&\hfill{\mbox { by (\ref{eq1})}},\\
\end{array}$$
$$\begin{array}{lclr}
g[v_2v_j]&=& \deg(v_2)+\deg(v_j)-2y_2-2y_j-g(v_2v_j)&\\
&=&\deg(v_2)+\deg(v_j)-2x_2+2-2x_j-f(v_2v_j)&\\
&\geq &     k+2&\hfill {\mbox { by (\ref{eq1})}}.
\end{array}$$
Similarly, for $e\in
E(G)\setminus\{v_1v_j,v_2v_j\mid j\not\in\{1,2,\ell\}\}$,
we obtain $g[e]=f[e]\geq k$. In
addition, if $v_1v_2\in E(G)$, then
$$\begin{array}{lclr}
g[v_1v_2]&=& \deg(v_1)+\deg(v_2)-2y_1-2y_2-g(v_1v_2)&\\
&=&\deg(v_1)+\deg(v_2)-2x_1-2x_2-f(v_1v_2)&\\
&\geq &k&\hfill {\mbox { by (\ref{eq1})}}.
\end{array}$$
Hence, $g$ is a SEDF of $G$. If $g$ satisfies the required
condition, the proof is complete. Otherwise, by repeating this
process we can obtain the required function.
\end{proof}

\begin{corollary}\label{equal_minus_multipartite}
{\rm Let $G$ be a complete multipartite graph. There exists a
$\gamma_{sk}'(G)$-function $f$ such that the difference between the
number of negative edges at every two vertices in the same
partite set is at most 1.}
\end{corollary}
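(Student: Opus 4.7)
The plan is to apply Lemma \ref{equal_minus} repeatedly. First I would verify that its hypothesis is met by every intra-partite pair: in a complete multipartite graph, two vertices $u,v$ in the same partite set are nonadjacent and have exactly the same neighborhood, namely every vertex outside their partite set. Hence $N(u)\setminus\{v\}=N(v)\setminus\{u\}$ holds trivially, so Lemma \ref{equal_minus} applies to the pair $(u,v)$.

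Starting from any $\gamma_s'(G)$-function $f$, I would then look for a partite set containing two vertices $u,v$ whose numbers of incident negative edges $x_u,x_v$ differ by at least $2$, say $x_u\leq x_v-2$, and apply the lemma. This yields a new function $g$ of the same weight (hence again a $\gamma_s'(G)$-function) obtained by flipping the signs of two edges $uv_\ell$ and $vv_\ell$ at a common neighbor $v_\ell$ outside the partite set. Crucially, by the construction in the lemma, the negative-edge count changes only at $u$ (to $x_u+1$) and at $v$ (to $x_v-1$); the count at $v_\ell$ is unchanged because one incident sign is flipped up and the other down, and counts at all other vertices are untouched. In particular, pairs in other partite sets are unaffected, and pairs within the current partite set that do not involve $u$ or $v$ are unaffected.

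The only real point to verify is termination. I would use the potential $\Phi(f)=\sum_{w\in V(G)} x_w^2$. A single swap changes $\Phi$ by
\[
(x_u+1)^2+(x_v-1)^2-x_u^2-x_v^2=2(x_u-x_v)+2,
\]
which is at most $-2$ whenever the swap is performed, since then $x_u\leq x_v-2$. Because $\Phi$ is a nonnegative integer, only finitely many swaps can occur, and the process must halt at a $\gamma_s'(G)$-function in which every pair $u,v$ lying in a common partite set satisfies $|x_u-x_v|\leq 1$.

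I do not foresee a serious obstacle: Lemma \ref{equal_minus} already carries out the single-step exchange and preserves optimality, so the corollary is essentially a monovariant argument on top of it. The only subtlety worth stating explicitly is that an intra-partite exchange leaves negative-edge counts outside the partite set invariant, which is what lets the reductions across different partite sets be done independently.
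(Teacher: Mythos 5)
Your proposal is correct and follows essentially the same route as the paper, which treats the corollary as an immediate consequence of Lemma \ref{equal_minus}: the hypothesis $N(u)\setminus\{v\}=N(v)\setminus\{u\}$ holds for any two vertices in a common partite set of a complete multipartite graph, and one iterates the exchange of the lemma (the paper's lemma ends with exactly this ``repeat the process'' step). Your potential function $\Phi(f)=\sum_{w}x_w^2$, which drops by at least $2$ per exchange, supplies a rigorous termination argument that the paper leaves implicit, so your write-up is if anything slightly more complete.
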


\section{SEDFs of complete tripartite graphs with vertices of negative weight}\label{SEC3}

\noindent Consider the complete tripartite graph $K_{m,n,p}$ with partite sets $U,V$ and $W$.
Throughout this section we assume $|U| =m$, $|V|=n$ and $|W|=p$, where $1\leq m\leq n\leq p$. In this section, we study SEDFs $f$ of $K_{m,n,p}$ such that
$f(w)<0$ for some $w\in W$.

\begin{lemma}\label{eee.ooo}
{\rm
Let $m,n,p$ be all even or all odd and $1\leq m\leq n\leq p$.
If $f$ is a $\gamma_s'(K_{m,n,p})$-function, then
$f(w)\geq 0$ for every vertex of $w\in W$.
}
\end{lemma}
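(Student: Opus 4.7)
The plan is to argue by contradiction. Assume $f$ is a $\gamma_s'(K_{m,n,p})$-function and that some $w_0\in W$ has $f(w_0)<0$; I will exhibit a SEDF of strictly smaller weight, contradicting minimality.

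The first step is a parity reduction. Since $m,n,p$ all have the same parity, $\deg(w_0)=m+n$ is even, so $f(w_0)=(m+n)-2x_{w_0}$ is even, and hence $f(w_0)\le -2$. Using this in the SEDF inequality $f[vw_0]\ge 1$ for every $v\in U\cup V$ together with $f(vw_0)\ge -1$ yields
\[
f(v)\ \ge\ 1+f(vw_0)-f(w_0)\ \ge\ 2,
\]
with the sharper bound $f(v)\ge 4$ whenever $f(vw_0)=+1$. Call the latter set $P_0$; note $|P_0|=(m+n)/2-1$. So every vertex of $U\cup V$ is "heavy" ($f$-value at least $2$) and vertices in $P_0$ are "very heavy" (at least $4$).

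Next I would invoke Corollary \ref{equal_minus_multipartite} to replace $f$ by a $\gamma_s'$-function $g$ of the same weight such that within each partite set the number of negative edges at any two vertices differ by at most one. Because $\sum_{w\in W}g(w)=\sum_{w\in W}f(w)$ (both equal the $f$-weight of edges incident to $W$), either (i) some $g(w^*)<0$ persists, in which case all the structural bounds above apply verbatim to $g$, or (ii) $g(w)\ge 0$ for every $w$ but the balancing swaps used to obtain $g$ already make explicit a local modification of the original $f$ that is weight-preserving, allowing the original $f$ to be replaced by a $\gamma'_s$-function on which the ensuing flip argument can be applied.

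The heart of the proof is the local modification. Using the sharper bound $g(v)\ge 4$ on the positive cone $P_0$ of $w^*$ (together with $g(v)\ge 2$ elsewhere in $U\cup V$), I would locate an edge $e=uu'\in E(U,V)$ with $g(e)=+1$ and with both endpoints sufficiently heavy (in particular $g(u)+g(u')\ge 4$ and, for each $w\in W$ with $g(w)<0$, $g(u)\ge 5+g(uw)$ and $g(u')\ge 5+g(u'w)$), then verify that flipping $g(e)$ from $+1$ to $-1$ preserves the inequality $g[e']\ge 1$ for every $e'\in N[e]$. This produces a SEDF of weight $w(g)-2<\gamma_s'(K_{m,n,p})$, the desired contradiction.

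The main obstacle is the final step: guaranteeing the existence of such an edge $e$. The dangerous case is an edge $e'=uw$ in $N[e]$ with $w\in W$ of small (possibly negative) weight, where $g[e']-2\ge 1$ can fail. Overcoming this requires an averaging/counting argument that uses the hypothesis $p\ge m+n$ (which makes $E(U,V)$ small relative to the edge-count across the cut), the balanced form of $g$ (so that at most a controlled number of $w\in W$ can have negative $g(w)$), and the parity-driven sharp bound $g(v)\ge 4$ on $P_0$. These three ingredients together force the existence of a flippable edge and close the argument.
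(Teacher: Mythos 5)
Your proposal takes a genuinely different route from the paper, but it has a real gap at exactly the point you flag as "the main obstacle," and that point is the entire content of the lemma. The preliminary reductions are fine: the parity argument giving $f(w_0)\le -2$, the bounds $f(v)\ge 2$ for all $v\in U\cup V$ and $f(v)\ge 4$ on the positive cone $P_0$ of $w_0$ all follow correctly from $f[vw_0]=f(v)+f(w_0)-f(vw_0)\ge 1$. But the existence of a flippable edge $e\in E(U,V)$ is only asserted, not proved, and it is not clear it can be proved in the generality needed. The derived constraints are perfectly consistent with a configuration in which every vertex of $U\cup V\setminus P_0$ has weight exactly $2$; for such a vertex $u$ and any $w\in W$ with $f(w)=-2$, inequality (\ref{eq1}) forces $f(uw)=-1$ and $f[uw]=1$ exactly, so no positive edge at $u$ can be flipped, and when $m+n$ is small $P_0$ can be empty or too small to supply both endpoints of an edge of $E(U,V)$. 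Your proposed rescue invokes $p\ge m+n$, which is not a hypothesis of Lemma \ref{eee.ooo} (the lemma assumes only $1\le m\le n\le p$, and it must also cover the range treated in Part 1). Separately, your case (ii) in the appeal to Corollary \ref{equal_minus_multipartite} is incoherent as written: if the balancing swaps produce a $g$ with $g(w)\ge 0$ on all of $W$, you have not derived any contradiction about the original $f$, and "allowing the original $f$ to be replaced" is not an argument.

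For contrast, the paper never tries to build a lighter SEDF. It passes to a balanced $\gamma_s'$-function (negative-edge counts within a partite set differing by at most $1$), assumes a vertex $w\in W$ of minimal weight $-2k\le -2$, and then, using (\ref{eq1}) to pin down the weights of vertices in $U\cup V$ and the forced negative edges between the weight-$2k$ part of $V$ and the weight-$(-2k)$ part $W_1$ of $W$, produces a vertex $w'\in W$ whose negative-edge count differs from that of a vertex of $W_1$ by $2k\ge 2$ — contradicting balancedness. That counting argument is what replaces your missing "averaging step," and it is where the actual work lies; without supplying either it or a concrete existence proof for your flippable edge, the proposal does not establish the lemma.
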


\begin{proof}
The proof is by contradiction.
By Corollary \ref{equal_minus_multipartite}, we may assume that the difference between the
number of negative edges at every two vertices in the same
partite set of $K_{m,n,p}$ is at most 1.
Assume $f(w)=-2k$, where $1\leq k\leq  (m+n)/2$, for some $w\in W$,
and $f(w')\geq -2k$ for all $w'\in W$. Then there are $(m+n+2k)/2$ negative edges
and $(m+n-2k)/2$ positive edges at $w$. Therefore the weight of $(m+n+2k)/2$ vertices
in $U\cup V$ must be at least $2k$ and the weight of $(m+n-2k)/2$ vertices
in $U\cup V$ must be at least $2k+2$.
Since $m\leq n$ and $f$ is a $\gamma_s'(K_{m,n,p})$-function, we can assume $f(u)=2k$ for
every $u\in U$. So there are $(n+p-2k)/2$ negative edges at every vertex $u\in U$.
Let $U\cup V_1$, where $V_1\subseteq V$, consist of vertices of weight
$2k$ and let $V_2=V\setminus V_1$ consist of vertices of weight $2k+2$.
Since $(m+n+2k)/2 > m$ and $f$ is a $\gamma_s'(K_{m,n,p})$-function, it follows that there is a vertex
$v\in V_1$ of weight $2k$. Indeed, $|V_1|= (n-m+2k)/2$.
Therefore there are $(m+p-2k)/2$ negative edges at $v$.
Let $W_1\subseteq W$ consist of vertices of weight $-2k$. Since every vertex in
$V_1$ must be joined to every vertex in $W_1$ with a negative edge by (\ref{eq1}), it follows that
$|W_1|\leq (m+p-2k)/2$. Hence, $|W\setminus W_1|\geq (p-m+2k)/2$ and
every vertex in this set has weight $-2k+2$.
Note that $(n+p-2k)/2-(m+p-2k)/2=(n-m)/2$. Let $W_2$ be a subset
of $W\setminus W_1$ with $(n-m)/2$ vertices and the edges between $W_2$ and
$U$ are all negative edges.
Let $W_3=W\setminus (W_1\cup W_2)$. Then
$$|W_3\mid\geq p-[(m+p-2k)/2+(n-m)/2]=(p-n+2k)/2\geq 1.$$
Now let $w'\in W_3$.
Then the edges between $w'$ and $U\cup V_1$ are all positive edges. Therefor the number of
negative edges at $w'$ is at most $(n+m-2k)/2$. On the other hand,
for every vertex $w\in W_1$ there are $(n+m+2k)/2$ negative edges. Since $k\geq 1$,
the difference between the number of negative edges at $w$ and at $w'$ is $2k\geq 2$, which is a contradiction.
\end{proof}

The proof of the following result is similar to the proof of Lemma \ref{eee.ooo}.

\begin{lemma}\label{eeo.ooe}
Let $m,n$ be even and $p$ be odd or $m,n$ be odd and $p$ be even, where $1\leq m\leq n<p$.
If $f$ is a $\gamma_s'(K_{m,n,p})$-function, then
$f(w)\geq 0$ for every vertex of $w\in W$.
\end{lemma}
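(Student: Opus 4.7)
My plan is to follow the scheme of Lemma \ref{eee.ooo}, adjusting the weight counts for the new parity regime. In both cases of the hypothesis, $m+n$ is even while $n+p$ and $m+p$ are odd; consequently, for any SEDF $f$, $f(w)$ is even for $w \in W$ whereas $f(u)$ and $f(v)$ are odd for $u \in U$ and $v \in V$. By Corollary \ref{equal_minus_multipartite}, I may assume that within each partite set the numbers of negative edges at any two vertices differ by at most 1. I would argue by contradiction: suppose $f(w) = -2k$ is the minimum weight on $W$ with $k \geq 1$.

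At $w$ there are exactly $(m+n+2k)/2$ negative and $(m+n-2k)/2$ positive edges. Inequality (\ref{eq1}) combined with parity forces the other endpoint of a negative edge to have weight at least $2k+1$ and of a positive edge at least $2k+3$. Since $m \leq n$ and $f$ is a $\gamma_s'(K_{m,n,p})$-function, I would assume $f(u) = 2k+1$ for every $u \in U$; letting $V_1 \subseteq V$ be the vertices of weight $2k+1$ and $V_2 = V \setminus V_1$ those of weight $2k+3$, counting the negative edges at $w$ gives $|V_1| = (n-m+2k)/2 \geq 1$. The resulting counts are $(n+p-2k-1)/2$ negative edges at each $u \in U$ and $(m+p-2k-1)/2$ at each $v \in V_1$.

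Set $W_1 = \{w' \in W : f(w') = -2k\}$. The key observation is that for $v \in V_1$ and $w' \in W_1$ one has $f(v) + f(w') = 1$, so (\ref{eq1}) forces every such edge to be negative; the same reasoning applies to edges from $U$ to $W_1$. This yields $|W_1| \leq (m+p-2k-1)/2$, hence $|W \setminus W_1| \geq (p-m+2k+1)/2$ with every vertex of $W \setminus W_1$ having weight $-2k+2$. Since the difference $(n+p-2k-1)/2 - (m+p-2k-1)/2 = (n-m)/2$ matches exactly the number of ``extra'' negative edges from each $u$, I would extract $W_2 \subseteq W \setminus W_1$ of size $(n-m)/2$ absorbing them, leaving $W_3 = W \setminus (W_1 \cup W_2)$ non-empty (in fact $|W_3| \geq (p-n+2k+1)/2 \geq 1$).

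To close, I would observe that each $v \in V_1$ spends all $(m+p-2k-1)/2$ of its negative edges on $W_1$, and each $u \in U$ spends all of its negative edges on $W_1 \cup W_2$, so every edge from $U \cup V_1$ to $W_3$ is positive. A fixed $w' \in W_3$ therefore has at least $m + |V_1| = (m+n+2k)/2$ positive edges and hence at most $(m+n-2k)/2$ negative ones, whereas each $w \in W_1$ has $(m+n+2k)/2$ negative edges; the gap of $2k \geq 2$ contradicts Corollary \ref{equal_minus_multipartite}. The main obstacle is purely parity bookkeeping: one must verify that the crucial equality $f(v)+f(w')=1$ (replacing $f(v)+f(w')=0$ from the even case) still triggers (\ref{eq1}), after which the entire combinatorial argument transfers intact from the proof of Lemma \ref{eee.ooo}.
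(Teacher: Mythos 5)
Your proposal is correct and follows essentially the same route as the paper's own proof: the identical contradiction setup with $f(w)=-2k$, the same forced weights $2k+1$ and $2k+3$ on $U\cup V$, the same sets $V_1$, $W_1$, $W_2$, $W_3$ with the same cardinality bounds, and the same final contradiction via Corollary \ref{equal_minus_multipartite}. The only differences are expository (you make explicit the parity of $f$ on each partite set and the fact that $U$--$W_1$ edges are also forced negative, which the paper leaves implicit).
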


\begin{proof}
The proof is by contradiction.
Let $m,n$ be even and $p$ be odd. (The case $m,n$ odd and $p$ even is similar.)
By Corollary \ref{equal_minus_multipartite}, we may assume that the difference between the
number of negative edges at every two vertices in the same
partite set of $K_{m,n,p}$ is at most 1.
Assume $f(w)=-2k$, where $1\leq k\leq  (m+n)/2$, for some $w\in W$,
and $f(w')\geq -2k$ for all $w'\in W$. Then there are $(m+n+2k)/2$ negative edges
and $(m+n-2k)/2$ positive edges at $w$. Therefore the weight of $(m+n+2k)/2$ vertices
in $U\cup V$ must be at least $2k+1$ and the weight of $(m+n-2k)/2$ vertices
in $U\cup V$ must be at least $2k+3$.
Since $m\leq n$ and $f$ is a $\gamma_s'(K_{m,n,p})$-function, we can assume $f(u)=2k+1$ for
every $u\in U$. So there are $(n+p-2k-1)/2$ negative edges at every vertex $u\in U$.
Let $U\cup V_1$, where $V_1\subseteq V$, consist of vertices of weight
$2k+1$ and let $V_2=V\setminus V_1$ consist of vertices of weight $2k+3$.
Since $(m+n+2k)/2 > m$ and $f$ is a $\gamma_s'(K_{m,n,p})$-function, it follows that there is a vertex
$v\in V_1$ of weight $2k+1$. Indeed, $|V_1|= (n-m+2k)/2$.
Therefore there are $(m+p-2k-1)/2$ negative edges at $v$.
Let $W_1\subseteq W$ consist of vertices of weight $-2k$. Since every vertex in
$V_1$ must be joined to every vertex in $W_1$ with a negative edge by (\ref{eq1}), it follows that
$|W_1|\leq (m+p-2k-1)/2$. Hence, $|W\setminus W_1|\geq (p-m+2k+1)/2$ and
every vertex in this set has weight $-2k+2$.
Note that $(n+p-2k-1)/2-(m+p-2k-1)/2=(n-m)/2$. Let $W_2$ be a subset
of $W\setminus W_1$ with $(n-m)/2$ vertices and the edges between $W_2$ and
$U$ are all negative edges.
Let $W_3=W\setminus (W_1\cup W_2)$. Then
$$|W_3\mid\geq p-[(m+p-2k-1)/2+(n-m)/2]=(p-n+2k+1)/2\geq 1.$$
Now let $w'\in W_3$.
Then the edges between $w'$ and $U\cup V_1$ are all positive edges. Therefor the number of
negative edges at $w'$ is at most $(n+m-2k)/2$. On the other hand,
for every vertex $w\in W_1$ there are $(n+m+2k)/2$ negative edges. Since $k\geq 1$,
the difference between the number of negative edges at $w$ and at $w'$ is $2k\geq 2$, which is a contradiction.
\end{proof}

\begin{lemma}\label{oee.eoe}
{\rm
Let $m$ be odd and $n,p$ be even or $m,p$ be even and $n$ be odd, where $1\leq m< n\leq p$.
If $f$ is a $\gamma_s'(K_{m,n,p})$-function, then
$f(w)\geq -1$ for every vertex of $w\in W$.
}
\end{lemma}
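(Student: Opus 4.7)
The plan is to mirror the template of Lemmas~\ref{eee.ooo} and~\ref{eeo.ooe}, adapted to this lemma's parity situation. In both subcases $m+n$ is odd, so $\deg(w)=m+n$ forces $f(w)$ to be odd; the assertion $f(w)\geq -1$ therefore reduces to excluding $f(w)\leq -3$. I argue by contradiction: assume $f(w)=-(2k+1)$ for some $k\geq 1$, with $f$ minimum on $W$, and invoke Corollary~\ref{equal_minus_multipartite} to fix the assumption that numbers of negative edges at any two vertices of the same partite set differ by at most one.

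The count at $w$ yields $(m+n+2k+1)/2$ negative and $(m+n-2k-1)/2$ positive incident edges. By~(\ref{eq1}), every $x\in U\cup V$ satisfies $f(x)\geq 2k+1$, and $f(x)\geq 2k+3$ when $wx$ is positive. The parities of $\deg(u)$ and $\deg(v)$ differ between the two subcases, so the minimum admissible $f$-value on each of $U$ and $V$ is explicit: in the first subcase $2k+2$ on $U$ and $2k+1$ on $V$, while in the second subcase these two values switch sides. Using $m<n$ and the minimality of $w(f)$, I place every $u\in U$ at its minimum admissible value and partition $V=V_1\cup V_2$ into the vertices of $V$ attaining the lowest admissible value (with $V_2$ one step higher). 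This gives $|V_1|=(n-m+2k+1)/2\geq 1$.

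Next let $W_1=\{w'\in W:f(w')=-(2k+1)\}$. For $v\in V_1$ and $w'\in W_1$ the sum $f(v)+f(w')\in\{0,1\}$ forces $vw'$ to be negative, so $|W_1|$ is bounded by the negative-edge count at any $v\in V_1$. Corollary~\ref{equal_minus_multipartite} combined with the minimality of $f(w)$ then forces every vertex of $W\setminus W_1$ to have $f$-value $-(2k-1)$ and exactly $(m+n+2k-1)/2$ incident negative edges, and supplies a lower bound on $|W\setminus W_1|$. The per-vertex excess of negative edges at $u\in U$ over $v\in V_1$ is $(n-m-1)/2$ in the first subcase and $(n-m+1)/2$ in the second; following the template, I choose $W_2\subseteq W\setminus W_1$ consisting of vertices all of whose edges to $U$ are negative, bound $|W_2|$ by this excess, and set $W_3=W\setminus(W_1\cup W_2)$. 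The arithmetic, using $p\geq n$ and $k\geq 1$, then yields $|W_3|\geq 1$.

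The contradiction is obtained exactly as in Lemma~\ref{eeo.ooe}: for $w'\in W_3$, every edge from $w'$ to $U\cup V_1$ is positive, so its negative edges all lie in $V_2$ and number at most $|V_2|=(m+n-2k-1)/2$, whereas $w\in W_1$ carries $(m+n+2k+1)/2$ negative edges; the resulting gap $2k+1\geq 3$ violates Corollary~\ref{equal_minus_multipartite}. The second subcase is handled in parallel with the parities of $U$ and $V$ interchanged; the arithmetic shifts by a few additive constants but the structure is identical. I expect the main difficulty to be the bookkeeping in the third step: rigorously justifying the bound on $|W_2|$ by accounting for how the surplus negative edges at $U$ distribute across $W\setminus W_1$, and thereby showing that every edge from a vertex of $W_3$ into $U\cup V_1$ must be positive---together with confirming $W_3\neq\emptyset$ in the tight boundary case $p=n$.
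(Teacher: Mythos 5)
Your proposal follows essentially the same route as the paper's proof: the same contradiction setup with $f(w)=-(2k+1)$ normalized via Corollary \ref{equal_minus_multipartite}, the same parity-forced weights on $U$ and $V$ (with $|V_1|=(n-m+2k+1)/2$), the same sets $W_1$, $W_2$, $W_3$ with the identical bounds $|W_1|\leq (m+p-2k-1)/2$ and $|W_2|=(n-m\mp 1)/2$, and the same final contradiction from the gap $2k+1\geq 3$ in negative-edge counts between $W_1$ and $W_3$. The bookkeeping step you flag as delicate is treated at exactly the same level of detail in the paper itself, so there is nothing to add.
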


\begin{proof}
The proof is by contradiction.
Let $m$ be odd and $n,p$ be even. (The case $m,p$ even and $n$ odd is similar.)
By Corollary \ref{equal_minus_multipartite}, we may assume that the difference between the
number of negative edges at every two vertices in the same
partite set of $K_{m,n,p}$ is at most 1.
Assume $f(w)=-2k-1$, where $1\leq k\leq  (m+n-1)/2$, for some $w\in W$,
and $f(w')\geq -2k-1$ for all $w'\in W$. Then there are $(m+n+2k+1)/2$ negative edges
and $(m+n-2k-1)/2$ positive edges at $w$. Therefore the weight of $(m+n+2k+1)/2$ vertices
in $U\cup V$ must be at least $2k+1$ and the weight of $(m+n-2k-1)/2$ vertices
in $U\cup V$ must be at least $2k+3$. Since $m<n$ and $f$ is a $\gamma_s'(K_{m,n,p})$-function,
we can assume $f(u)=2k+2$ for
every vertex $u\in U$. So there are $(n+p-2k-2)/2$ negative edges at every vertex $u\in U$.
Let $V_1\subset V$ consist of vertices of weight
$2k+1$ and let $V_2=V\setminus V_1$ consist of vertices of weight $2k+3$.
Since $(m+n+2k+1)/2 > m$ and $f$ is a $\gamma_s'(K_{m,n,p})$-function, it follows that there is a vertex
$v\in V_1$ of weight $2k+1$. Indeed, $|V_1|= (n-m+2k+1)/2$.
Therefore there are $(m+p-2k-1)/2$ negative edges at $v$.
Let $W_1\subseteq W$ consist of vertices of weight $-2k-1$. Since every vertex in
$V_1$ must be joined to every vertex in $W_1$ with a negative edge by (\ref{eq1}), it follows that
$|W_1|\leq (m+p-2k-1)/2$. Hence, $|W\setminus W_1|\geq (p-m+2k+1)/2$ and
every vertex in this set has weight $-2k+1$.

Note that $(n+p-2k-2)/2-(m+p-2k-1)/2=(n-m-1)/2$. Let $W_2$ be a subset
of $W\setminus W_1$ with $(n-m-1)/2$ vertices and the edges between $W_2$ and
$U$ are all negative edges. Let $W_3=W\setminus (W_1\cup W_2)$. Then
$$|W_3\mid\geq p-[(m+p-2k-1)/2+(n-m-1)/2]=(p-n+2k+2)/2\geq 2.$$
Now let $w'\in W_3$.
Then the edges between $w'$ and $U\cup V_1$ are all positive edges. Therefor the number of
negative edges at $w'$ is at most $(n+m-2k-1)/2$. On the other hand,
for every vertex $w\in W_1$ there are $(n+m+2k+1)/2$ negative edges. Since $k\geq 1$,
the difference between the number of negative edges at $w$ and at $w'$ is $2k+1\geq 3$, which is a contradiction.
\end{proof}

The proof of the following result is similar to the proof of Lemma \ref{oee.eoe}.

\begin{lemma}\label{oeo.eoo}
Let $m,p$ be odd and $n$ be even or $m$ be even and $n,p$ be odd, where $1\leq m<n\leq p$.
If $f$ is a $\gamma_s'(K_{m,n,p})$-function, then
$f(w)\geq -1$ for every vertex of $w\in W$.
\end{lemma}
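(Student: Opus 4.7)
My plan is to mimic the contradiction argument of Lemma \ref{oee.eoe}, adjusting the parity-dependent constants. In both parity cases of the statement, $\deg(w)=m+n$ is odd, so $f(w)$ is odd, and the negation of the conclusion reads $f(w)\leq -3$. I would therefore assume for contradiction that $f(w)=-2k-1$ for some $k\geq 1$, chosen so that $f(w')\geq -2k-1$ for every $w'\in W$. By Corollary \ref{equal_minus_multipartite} I may also assume that within each partite set the numbers of negative edges at any two vertices differ by at most $1$. Counting at $w$ then gives $(m+n+2k+1)/2$ negative edges and $(m+n-2k-1)/2$ positive ones, and by (\ref{eq1}) the endpoints of the negative edges from $w$ must have weight at least $2k+1$, while those of the positive edges must have weight at least $2k+3$.

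Next I would exploit $m<n$ together with the parities of $\deg(u)=n+p$ and $\deg(v)=m+p$ to pin down the weights. When $m,p$ are odd and $n$ is even, $f(u)$ is odd and I set $f(u)=2k+1$ for every $u\in U$; when $m$ is even and $n,p$ are odd, $f(u)$ is even and I set $f(u)=2k+2$. In either case $f(u)<2k+3$, so every edge from $w$ to $U$ is forced to be negative by (\ref{eq1}), leaving exactly $(n-m+2k+1)/2$ negative edges from $w$ to $V$. I partition $V$ into $V_1$, consisting of the negative-edge neighbors of $w$ in $V$ (each carrying the minimum admissible weight on $V$), and $V_2=V\setminus V_1$, consisting of the positive-edge neighbors (each two units heavier), so $|V_1|=(n-m+2k+1)/2\geq 2$ under $n>m$ and $k\geq 1$.

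I then pick $v\in V_1$ and compute $x_v$, its number of negative edges. Let $W_1\subseteq W$ be the set of vertices of weight $-2k-1$. Because $f(v)+f(w')\leq 1$ for every $w'\in W_1$, each edge from $v$ to $W_1$ is forced to be negative by (\ref{eq1}), giving $|W_1|\leq x_v$; consequently every vertex of $W\setminus W_1$ has weight at least $-2k+1$. I then introduce $W_2\subseteq W\setminus W_1$ of size $x_u-x_v$ consisting of vertices all of whose edges to $U$ are negative, exactly as in Lemma \ref{oee.eoe}; this size works out to $(n-m+1)/2$ in the first parity case and $(n-m-1)/2$ in the second. A short arithmetic check using $p\geq n$ and $k\geq 1$ then shows $|W_3|:=|W\setminus(W_1\cup W_2)|\geq 2$ in both cases.

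Finally, any $w'\in W_3$ has every edge to $U\cup V_1$ positive, so its negative edges lie entirely inside $V_2$, giving at most $(n+m-2k-1)/2$ negatives at $w'$. Since every $w\in W_1$ has $(m+n+2k+1)/2$ negatives, the gap is $2k+1\geq 3$, contradicting Corollary \ref{equal_minus_multipartite}. The main obstacle is running both parity cases in parallel and justifying the existence of $W_2$ of the stated size (a point taken for granted in Lemma \ref{oee.eoe} and one I would re-examine carefully here); once the parity accounting is in place the contradiction is essentially immediate.
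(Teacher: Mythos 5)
Your proposal follows essentially the same contradiction argument as the paper: assume a vertex of $W$ has weight $-2k-1$ with $k\geq 1$, normalize via Corollary \ref{equal_minus_multipartite}, force all $U$-edges at $w$ negative, split $V$ into $V_1,V_2$ and $W$ into $W_1,W_2,W_3$ with the same cardinalities ($|W_2|=(n-m+1)/2$ resp.\ $(n-m-1)/2$), and derive the gap $2k+1\geq 3$ in negative-edge counts within $W$. The only difference is presentational: you carry both parity subcases in parallel (the paper writes out only the case $m,p$ odd, $n$ even and declares the other similar), and the point you flag about justifying $W_2$ and the positivity of edges from $W_3$ to $U\cup V_1$ is likewise taken for granted in the paper's own proof.
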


\begin{proof}
The proof is by contradiction.
Let $m, p$ be odd and $n$ be even. (The case $m$ even and $n,p$ odd is similar.)
By Corollary \ref{equal_minus_multipartite}, we may assume that the difference between the
number of negative edges at every two vertices in the same
partite set of $K_{m,n,p}$ is at most 1.
Assume $f(w)=-2k-1$, where $1\leq k\leq  (m+n-1)/2$, for some $w\in W$,
and $f(w')\geq -2k-1$ for all $w'\in W$. Then there are $(m+n+2k+1)/2$ negative edges
and $(m+n-2k-1)/2$ positive edges at $w$. Therefore the weight of $(m+n+2k+1)/2$ vertices
in $U\cup V$ must be at least $2k+1$ and the weight of $(m+n-2k-1)/2$ vertices
in $U\cup V$ must be at least $2k+3$. Since $m<n$ and $f$ is a $\gamma_s'(K_{m,n,p})$-function,
we can assume $f(u)=2k+1$ for
every vertex $u\in U$. So there are $(n+p-2k-1)/2$ negative edges at every vertex $u\in U$.
Let $V_1\subset V$ consist of vertices of weight
$2k+2$ and let $V_2=V\setminus V_1$ consist of vertices of weight $2k+4$.
Since $(m+n+2k+1)/2 > m$ and $f$ is a $\gamma_s'(K_{m,n,p})$-function, it follows that there is a vertex
$v\in V_1$ of weight $2k+2$. Indeed, $|V_1|= (n-m+2k+1)/2$.
Therefore there are $(m+p-2k-2)/2$ negative edges at $v$.
Let $W_1\subseteq W$ consist of vertices of weight $-2k-1$. Since every vertex in
$V_1$ must be joined to every vertex in $W_1$ with a negative edge by (\ref{eq1}), it follows that
$|W_1|\leq (m+p-2k-2)/2$. Hence, $|W\setminus W_1|\geq (p-m+2k+2)/2$ and
every vertex in this set has weight $-2k+1$.

Note that $(n+p-2k-1)/2-(m+p-2k-2)/2=(n-m+1)/2$. Let $W_2$ be a subset
of $W\setminus W_1$ with $(n-m+1)/2$ vertices and the edges between $W_2$ and
$U$ are all negative edges. Let $W_3=W\setminus (W_1\cup W_2)$. Then
$$|W_3\mid\geq p-[(m+p-2k-2)/2+(n-m+1)/2]=(p-n+2k+1)/2\geq 1.$$
Now let $w'\in W_3$.
Then the edges between $w'$ and $U\cup V_1$ are all positive edges. Therefor the number of
negative edges at $w'$ is at most $(n+m-2k-1)/2$. On the other hand,
for every vertex $w\in W_1$ there are $(n+m+2k+1)/2$ negative edges. Since $k\geq 1$,
the difference between the number of negative edges at $w$ and at $w'$ is $2k+1\geq 3$, which is a contradiction.
\end{proof}

\section{The SEDN of $K_{m,n,p}$}\label{SEC4}
Consider the complete tripartite graph $K_{m,n,p}$ whose partite sets are $U,V$ and $W$.
Throughout this section we assume $|U|=m$, $|V|=n$ and $|W\mid=p$, where $m,n$ and $p$ are positive integers, $m\leq n$ and $p\geq m+n$
In this section we compute the signed edge domination number of $K_{m,n,p}$, where $m\geq 2$
and $(m,n)\neq(2,2)$ if $p$ is odd.

\begin{theorem}\label{p>m+n.eee}
{\rm Let $m,n$ and $p$ be even and $p \geq m+n$.
Then $\gamma_s'(K_{m,n,p})=m+n$.}
\end{theorem}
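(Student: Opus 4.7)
The plan is to prove both $\gamma_s'(K_{m,n,p})\leq m+n$ and $\gamma_s'(K_{m,n,p})\geq m+n$. For the upper bound I would exhibit an SEDF $f$ of weight exactly $m+n$ designed so that $f(u)=2$ for every $u\in U$, $f(v)=2$ for every $v\in V$, and $f(w)=0$ for every $w\in W$. Then $2w(f)=\sum_{x}f(x)=2m+2n+0$, giving $w(f)=m+n$, and the SEDF inequality $f[e]\geq 1$ is immediate on every edge: for $uv\in E_{UV}$ one has $f[uv]=f(u)+f(v)-f(uv)=4-f(uv)\geq 3$, and for $e\in E_{UW}\cup E_{VW}$ one has $f[e]=2-f(e)\geq 1$. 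To realize such an $f$, the vertex-weight targets force each $u\in U$ to have exactly $(n+p-2)/2$ negative incident edges, each $v\in V$ to have $(m+p-2)/2$, and each $w\in W$ to have $(m+n)/2$; double counting then forces $(mn-m-n)/2$ negative edges inside $E_{UV}$ (a non-negative integer because $m,n\geq 2$), together with prescribed row and column sums for the negative edges in $E_{UW}$ and $E_{VW}$. An explicit Gale--Ryser style construction realises the three corresponding bipartite subgraphs of negative edges.

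For the lower bound, let $f$ be any $\gamma_s'(K_{m,n,p})$-function, and by Corollary~\ref{equal_minus_multipartite} assume the negative-edge counts within each part differ by at most one, so that all values $f(u),f(v),f(w)$ are even integers. By Lemma~\ref{eee.ooo}, $f(w)\geq 0$ for every $w\in W$, hence $\sum_w f(w)\geq 0$. From $2w(f)=\sum_u f(u)+\sum_v f(v)+\sum_w f(w)$ it suffices to show $\sum_u f(u)+\sum_v f(v)\geq 2(m+n)$. I would argue by cases on $\min_u f(u)$ and $\min_v f(v)$. In the generic case $f(u)\geq 2$ for all $u$ and $f(v)\geq 2$ for all $v$ the inequality is immediate. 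If some $f(u_0)\leq -2$ (or symmetrically $f(v_0)\leq -2$), the edge constraints $f(u_0)+f(v)\geq 0$ and $f(u_0)+f(w)\geq 0$ force $f(v)\geq 2$ for every $v\in V$ and $f(w)\geq 2$ for every $w\in W$, and balance in $U$ together with $p\geq m+n$ and $m\leq n$ closes the bound.

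The delicate case is $f(u_0)=0$ for some $u_0\in U$ (and symmetrically $f(v_0)=0$). Here I would combine three ingredients: every $v$ (respectively $w$) joined to $u_0$ by a positive edge satisfies $f(v)\geq 2$ (respectively $f(w)\geq 2$); summing $f[u_0w]\geq 1$ over $w\in W$ and $f[u_0v]\geq 1$ over $v\in V$ yields the per-vertex bound $\sum_v f(v)+\sum_w f(w)\geq n+p$; and summing the first per-vertex inequality over every $u\in U$ with $f(u)=0$ sharpens the bound on $\sum_w f(w)$ using the total number of positive edges from the zero-weight vertices in $U$ to $W$. The main obstacle is this last step: the basic pair of per-vertex inequalities only gives $w(f)\geq (m+2n)/2$, and to reach $w(f)\geq m+n$ one must exploit the multiplicity of zero-weight vertices, the hypothesis $p\geq m+n$, and the parity of $w(f)$ carefully.
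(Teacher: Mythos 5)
Your upper-bound construction coincides with the paper's: negative-edge counts $(n+p-2)/2$, $(m+p-2)/2$ and $(m+n)/2$ at the vertices of $U$, $V$ and $W$, giving vertex weights $2,2,0$ and total weight $m+n$, with the same feasibility count $mn-m-n\geq 0$. Your lower bound also starts the same way as the paper's (Corollary~\ref{equal_minus_multipartite} plus Lemma~\ref{eee.ooo} to get $f(w)\geq 0$ on $W$, then try to force $f(r)\geq 2$ on $U\cup V$), and your handling of the cases $\min f(u)\leq -2$ and $\min f(u),\min f(v)\geq 2$ is sound.

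The gap is exactly where you locate it, and it is genuine: the case in which some vertex of $U\cup V$ has weight $0$ is not closed, and the bound $w(f)\geq (m+2n)/2$ you do obtain is strictly less than $m+n$ for every $m\geq 1$, so the proposal does not prove the theorem. The ingredient you are missing is the forcing clause attached to (\ref{eq1}): if $f(r)+f(w)\in\{0,1\}$ for an edge $rw$, then $f(rw)=-1$. Consequently a weight-zero vertex $r\in U$ must be joined by a negative edge to \emph{every} weight-zero vertex of $V\cup W$, although it carries only $(n+p)/2$ negative edges in total, and a weight-zero $w\in W$ carries only $(m+n)/2$ negative edges, all of which must absorb the weight-zero vertices of $U\cup V$. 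This is what the paper invokes when it says that, the weights on $W$ being zero, no vertex of $U\cup V$ can have weight zero: in the extremal configuration every vertex of $W$ has weight $0$, so a weight-zero $r\in U$ would need all $p$ of its edges to $W$ to be negative while $(n+p)/2<p$ because $p\geq m+n>n$. Making this airtight for an arbitrary $\gamma_s'$-function amounts to bounding the zero-weight classes $U_0,V_0,W_0$ by these forced-negative-edge counts and verifying $|U_0|+|V_0|+|W_0|\leq p$ using $p\geq m+n$; it is this counting of forced negative edges, not the parity of $w(f)$, that closes the case. (The paper itself is quite terse at this step, but the forcing observation is the idea your write-up lacks.)
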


\begin{proof}
Consider the graph $K_{m,n,p}$ whose partite sets are $U,V$ and $W$.
By assumption
$$m(n+p-2)/2+n(m+p-2)/2-p(m+n)/2=mn-m-n$$
is even.
First we label $(mn-m-n)/2$ edges between $U$ and $V$ with $-1$ in the following way.
Label an edge $uv$, where $u\in U$ and $v\in V$, with $-1$ if
\begin{enumerate}
\item the total number of negative edges between $U$ and $V$ is less than
$(mn-m-n)/2$,

\item the number of negative edges at $u$ is less than $(n+p-2)/2$,

\item the number of negative edges at $v$ is less than $(m+p-2)/2$,

\item the number of negative edges at $u$ is less than or equal to
the number of negative edges at $u'$ for every $u'\in (U\setminus\{u\})$, and

\item the number of negative edges at $v$ is less than or equal to
the number of negative edges at $v'$ for every $v'\in (V\setminus\{v\})$.
\end{enumerate}

\noindent Then we label $p(m+n)/2$ edges between $U\cup V$ and $W$ with $-1$ in
a similar fashion described above.
An edge $rw$, where $r\in (U\cup V)$ and $w\in W$ is labelled by $-1$ if
\begin{enumerate}
\item the number of negative edges between $U\cup V$ and $W$ is less than
$p(m+n)/2$,

\item the number of negative edges at $r$ is less than $(n+p-2)/2$ if $r\in U$,

\item the number of negative edges at $r$ is less than $(m+p-2)/2$ if $r\in V$,

\item the number of negative edges at $w$ is less than $(m+n)/2$,

\item the number of negative edges at $r$ is less than or equal to
the number of negative edges at $u$ for every $u\in (U\setminus\{r\})$ if $r\in U$,

\item the number of negative edges at $r$ is less than or equal to
the number of negative edges at $v$ for every $v\in (V\setminus\{r\})$ if $r\in V$,

\item the number of negative edges at $w$ is less than or equal to
the number of negative edges at $w'$ for every $w'\in (W\setminus\{w\})$.
\end{enumerate}

\noindent
Then there are exactly $((p+n)/2)-1$, $((p+m)/2)-1$ and $(m+n)/2$
negative edges at every vertex in $U, V$ and $W$, respectively.
Label the remaining edges of $K_{m,n,p}$ by $+1$. Then
all vertices in $U\cup V$ have weight $2$ and all the vertices in $W$
have weight zero.
Hence, this labeling defines a SEDF $f$ of $K_{m,n,p}$ by (\ref{eq1}), and $\omega(f)=m+n$.
Note that since the weight of every vertex of $W$ is zero, no vertices in $U\cup V$ can
have weight zero by (\ref{eq1}).
Now by Lemma \ref{eee.ooo} and the facts that $f(W)=0$ and $f(r)=2$ for every $r\in U\cup V$,
it follows that $\gamma_s'(K_{m,n,p})=m+n$.
\end{proof}

\begin{theorem}\label{p>=m+n+1.ooo}
{\rm Let $m,n$ and $p$ be odd, $m,n\geq 3$ and $p\geq m+n+1$.
Then $\gamma_s'(K_{m,n,p})=m+n+1$.
}
\end{theorem}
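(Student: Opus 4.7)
The proof has two parts. For the upper bound $\gamma'_s(K_{m,n,p})\leq m+n+1$, I would construct an SEDF of weight $m+n+1$ by modifying the scheme of Theorem~\ref{p>m+n.eee}. Designate a vertex $w^*\in W$ and, via the same two-phase balanced labeling (first on $U$-$V$ edges, then on $(U\cup V)$-$W$ edges), arrange for $(n+p-2)/2$ negative edges at each $u\in U$, $(m+p-2)/2$ at each $v\in V$, $(m+n-2)/2$ at $w^*$, and $(m+n)/2$ at every other $w\in W$. The resulting weights are $2$ on $U\cup V\cup\{w^*\}$ and $0$ on $W\setminus\{w^*\}$; the SEDF condition $f[e]\geq 1$ follows from (\ref{eq1}) exactly as in the even case, and summing gives $\omega(f)=m+n+1$.

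For the lower bound, note that $|E|=mn+mp+np$ is odd (a sum of three odd products), so $w(f)$ is odd; since $m+n$ is even, it suffices to prove $w(f)\geq m+n$. Let $f$ be a $\gamma'_s(K_{m,n,p})$-function, balanced within each partite set by Corollary~\ref{equal_minus_multipartite}, and recall Lemma~\ref{eee.ooo} gives $f(w)\geq 0$ for every $w\in W$. If some $u^*\in U$ has $f(u^*)=-2k<0$, then the balance forces $f(u)=-2k$ for every $u\in U$, and the per-edge inequality $f(u)+f(x)\geq 0$ forces $f(V)+f(W)\geq (2k+1)(n+p)-2k$; combining with $f(U)\geq -2km$ yields $w(f)\geq k(n+p-m-1)+(n+p)/2$, which for $k\geq 1$ and $p\geq m+n+1$ already exceeds $m+n+1$. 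A parallel argument handles $f(v^*)<0$ for $v^*\in V$, using additionally the degree bound $(m+n)/2$ on the negative edges at any $w\in W$ to cap the number of weight-$(-2k)$ vertices in $V$. Hence every weight is non-negative, and by Corollary~\ref{equal_minus_multipartite} all weights lie in $\{0,2\}$; letting $U_0,V_0,W_0$ denote the weight-$0$ subsets, we have $w(f)=m+n+p-|U_0|-|V_0|-|W_0|$, so it suffices to prove $|U_0|+|V_0|+|W_0|\leq p-1$.

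Three subcases are direct. If $W_0=\emptyset$, then $f(W)\geq 2p\geq 2(m+n+1)$; if $U_0=V_0=\emptyset$, then $f(U)+f(V)\geq 2(m+n)$ and oddness lifts the conclusion to $w(f)\geq m+n+1$; and if $U_0,V_0,W_0$ are all non-empty, the forced-negative relations $f(x)+f(y)\leq 1\Rightarrow f(xy)=-1$ yield the cyclic triangle bounds $|U_0|+|V_0|\leq(m+n)/2$, $|V_0|+|W_0|\leq(n+p)/2$, $|U_0|+|W_0|\leq(m+p)/2$, summing to $|U_0|+|V_0|+|W_0|\leq(m+n+p-1)/2\leq p-1$ (using $m+n+p$ odd and $p\geq m+n+1$). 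The main obstacle is the mixed subcase, say $U_0=\emptyset$ but $V_0,W_0\neq\emptyset$, where the naive triangle bound gives only $|V_0|+|W_0|\leq(2m+n+p)/2$, which can exceed $p-1$. To close the gap I would refine the counting: at each $v\in V_0$ the $(m+p)/2$ negative edges include $|W_0|$ forced toward $W_0$, leaving only $(m+p)/2-|W_0|$ further negative edges available for $v$-$U$ and $v$-$(W\setminus W_0)$ edges, and analogously at each $w\in W_0$; summing these refined per-vertex upper bounds must accommodate the $m(n+p-2)/2$ negative edges incident to $U$, yielding a quadratic inequality in $|V_0|$ and $|W_0|$. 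Combined with the parity observation that $|V_0|+|W_0|$ must be even (since $w(f)$ and $m+n+p$ are both odd), this constraint rules out $|V_0|+|W_0|\geq p$ under $p\geq m+n+1$ and $m\geq 3$, so $|V_0|+|W_0|\leq p-1$ and the lower bound follows.
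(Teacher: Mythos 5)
Your upper-bound construction coincides with the paper's: after observing that $mn-m-n$ is odd (so the fully balanced all-even scheme is unavailable), the paper likewise designates one vertex of $W$ to receive $(m+n-2)/2$ negative edges and balances everything else, producing weights $2$ on $U\cup V$, $0$ on $W$ except one vertex of weight $2$, hence $\omega(f)=m+n+1$. Your lower bound, however, departs from the paper's. The paper disposes of it in two lines, invoking Lemma \ref{eee.ooo} together with the observation from (\ref{eq1}) that weight-$0$ vertices of $W$ force negative edges to any weight-$\leq 0$ vertex of $U\cup V$; you instead attempt a full classification by the sets $U_0,V_0,W_0$ of weight-$0$ vertices. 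Your parity remark ($|E|=mn+mp+np$ is odd, so $w(f)$ is odd and $w(f)\geq m+n$ suffices) is a nice addition not in the paper, and your subcases ``$W_0=\emptyset$'', ``$U_0=V_0=\emptyset$'', and ``all three non-empty'' are correct as written.

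The gap is that the lower bound is not actually finished. First, the elimination of negative weights on $V$ is only gestured at: transplanting your $U$-computation verbatim gives $w(f)\geq k(m+p-n-1)+(m+p)/2$, which for $k=1$, $m=3$ and $n$ large is far below $m+n+1$, so the ``parallel argument'' genuinely needs the extra structural step you allude to (essentially rerunning the proof of Lemma \ref{eee.ooo} with $V$ in the role of $W$), and that step is not written. Second, and decisively, the mixed subcase $U_0=\emptyset$, $V_0,W_0\neq\emptyset$ is left as a plan: as you yourself note, the forced-negative-edge bounds yield only $|V_0|+|W_0|\leq m+(n+p)/2$, which exceeds the target $p-1$ precisely when $p\leq 2m+n+1$, a range that is non-empty for every admissible $m\geq 3$ since it contains $p=m+n+1,\dots$ The ``refined counting'' and the ``quadratic inequality in $|V_0|$ and $|W_0|$'' that are supposed to close this are never derived, so for these values of $p$ the inequality $|V_0|+|W_0|\leq p-1$, and with it $\gamma_s'(K_{m,n,p})\geq m+n+1$, remains unproved. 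Until that subcase is carried out the proposal establishes only the upper bound.
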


\begin{proof}
Consider the graph $K_{m,n,p}$ whose partite sets are $U,V$ and $W$.
By assumption,
$$m(n+p-2)/2+n(m+p-2)/2-p(m+n)/2=mn-m-n$$
is odd. In addition,
$(n+p-2)/2$, $(m+p-2)/2$ and $(m+n)/2$ are
all odd, or two are even and one is odd. Hence, there is no graph whose
$m$ vertices have degree $(n+p-2)/2$, $n$ vertices have degree $(n+p-2)/2$ and
$p$ vertices have degree $(m+n)/2$.

\noindent On the other hand,
$$\begin{array}{l}
m(n+p-2)/2+n(m+p-2)/2-(p-1)(m+n)/2\\
-(m+n-2)/2 =mn-m-n+1,
\end{array}$$
is an even number.
We label $(mn-m-n+1)/2$ edges between
$U$ and $V$ and $(p-1)(m + n)/2 + (m+n-2)/2$ edges between $U\cup V$ and
$W$ with $-1$  in a similar fashion
described in Theorem \ref{p>m+n.eee}.
Then there are $(n+p-2)/2$ negative edges at each vertex of $U$, $(m+p-2)/2$ negative
edges at each vertex of $V$ and $(m+n)/2$ negative edges at each vertex of $W$ except one vertex
which is incident with $(m+n-2)/2$ negative edges.
We label the remaining edges of $K_{m,n,p}$ with $+1$.
Then the weight of vertices in $U\cup V$ are all $2$ and the weight of vertices in $W$
are all zero except one vertex of $W$ whose weight is $2$.
Hence, this labeling defines a SEDF $f$ of $K_{m,n,p}$ by (\ref{eq1}), and $\omega(f)=m+n+1$.

Note that since the weight of every vertex of $W$ is zero, no vertices in $U\cup V$ can
have weight zero by (\ref{eq1}).
Now by Lemma \ref{eee.ooo} and the facts that $f(W)=2$ and $f(r)=2$ for every $r\in U\cup V$,
it follows that $\gamma_s'(K_{m,n,p})=m+n+1$.
\end{proof}

\begin{theorem}\label{p>m+n.ooe}
{\rm Let $m, n\geq 3$ be odd, $p$ be even and $p\geq m+n$.
Then
\begin{enumerate}
\item $\gamma_s'(K_{m,n,p})=\dfrac{3m+3n+2}{2}$ if $m+n\equiv 0 \pmod 4$,

\item $\gamma_s'(K_{m,n,p})=\dfrac{3m+3n}{2}$ if $m+n\equiv 2 \pmod 4$.
\end{enumerate}
}
\end{theorem}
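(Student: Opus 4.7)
The plan is to parallel the proofs of Theorems~\ref{p>m+n.eee} and~\ref{p>=m+n+1.ooo}. For the upper bound in Case~(2), I would construct an SEDF $f$ with $f(u)=f(v)=3$ for every $u\in U$, $v\in V$ and $f(w)=0$ for every $w\in W$, by labelling $(n+p-3)/2$ edges incident to each $u\in U$, $(m+p-3)/2$ edges incident to each $v\in V$, and $(m+n)/2$ edges incident to each $w\in W$ with $-1$. A parity check (using $m+n\equiv 2\pmod 4$ and $m,n\geq 3$) shows that the inter-partite negative-edge counts, in particular $a=(2mn-3(m+n))/4$ between $U$ and $V$, are non-negative integers, and the greedy balanced labelling from Theorem~\ref{p>m+n.eee} realizes the three bipartite subgraphs. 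This gives $\omega(f)=(3m+3n)/2$. For Case~(1), the analogous scheme makes $a$ a half-integer, so I would modify it by reassigning one vertex $w'\in W$ to have $(m+n-2)/2$ negative edges (so $f(w')=2$); this restores integrality and raises $\omega(f)$ to $(3m+3n+2)/2$.

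For the lower bound, let $f$ be a $\gamma_s'$-function. By Corollary~\ref{equal_minus_multipartite}, the $f$-values on each partite set take at most two values differing by 2, and by Lemma~\ref{eeo.ooe}, $f(w)\geq 0$ for every $w\in W$. The central case is $f(u)\in\{1,3\}$, $f(v)\in\{1,3\}$, $f(w)\in\{0,2\}$. Letting $m_1,n_1,p_0$ count the vertices with $f(u)=1$, $f(v)=1$, $f(w)=0$ respectively, the identity $2\omega(f)=F_U+F_V+F_W$ yields
\[
\omega(f)=\frac{3(m+n)}{2}+p-(m_1+n_1+p_0),
\]
so minimizing $\omega(f)$ amounts to maximizing $m_1+n_1+p_0$. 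Forced-edge arguments give $m_1+n_1\leq (m+n)/2$ (at each $w\in W$ with $f(w)=0$) and $p_0\leq (m+p-1)/2$ (at each $v\in V$ with $f(v)=1$); the decisive constraint, however, is the realizability of the bipartite subgraphs of negative edges. Writing out the linear system for the row/column sums, one finds $m_1+n_1+p_0\leq p$ in Case~(2) and $m_1+n_1+p_0\leq p-1$ in Case~(1), the latter because the count $a$ becomes a half-integer at $m_1+n_1+p_0=p$ when $m+n\equiv 0\pmod 4$. This delivers the required lower bounds. Remaining sub-cases (some $f(u)=-1$, or $f(w)\geq 4$) either force all $f(w)\geq 2$, giving $F_W\geq 2p\geq 2(m+n)$, or raise $F_U+F_V$ correspondingly, yielding $\omega(f)$ at least as large.

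The main obstacle is making the realizability/parity argument rigorous in Case~(1). I would prove it by splitting $U=U_1\cup U_3$, $V=V_1\cup V_3$, $W=W_0\cup W_2$ according to the $f$-values, setting up the system of equations among the six resulting cross-class negative-edge counts, and showing that under $m+n\equiv 0\pmod 4$ and $m_1+n_1+p_0=p$ at least two of these counts must be half-integers, contradicting the existence of the bipartite subgraphs.
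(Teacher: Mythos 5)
Your overall route matches the paper's: the upper bound is the same balanced greedy labelling (the paper places the Case~(1) excess on one vertex of $U$, giving it weight $5$, rather than on a vertex of $W$; both variants have weight $(3m+3n+2)/2$ and the same parity justification), and the lower bound rests on the same ingredients the paper uses, namely Corollary~\ref{equal_minus_multipartite}, Lemma~\ref{eeo.ooe}, the forced negative edges coming from (\ref{eq1}), and a parity obstruction for the extra $+1$ in Case~(1).

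The gap is the step $m_1+n_1+p_0\le p$ (resp.\ $\le p-1$). This does not follow from the forced-edge inequalities you list, nor from the linear system for the cross-class negative-edge counts together with their integrality. Concretely, take $m=3$, $n=5$, $p=8$, $m_1=3$, $n_1=0$, $p_0=6$, so $m_1+n_1+p_0=9>p$ and the putative weight is $3(m+n)/2+p-9=11<13$. Here $m_1+n_1=3\le (m+n)/2=4$, $n_1+p_0=6\le (n+p-1)/2=6$, the constraint coming from $n_1\ge 1$ is vacuous, and the cross-class counts work out to $e_{UV}=4$, $e_{UW}=14$, $e_{VW}=16$: all non-negative integers in range, so neither your linear system nor your half-integer test (which you only propose to run at $m_1+n_1+p_0=p$, not $p+1$) detects anything. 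What actually kills this configuration is saturation: each weight-$1$ vertex of $U$ has exactly $(n+p-1)/2=6$ negative edges, all forced by (\ref{eq1}) onto the six weight-$0$ vertices of $W$, so $e_{UV}$ would have to be $0$, not $4$. In other words, you must track \emph{where} the forced negative edges go, not merely count them; this is exactly the paper's (terser) argument that a weight-$1$ vertex of $U\cup V$ would need at least $p_0$ forced negative edges into $W$ while owning only $(n+p-1)/2<p_0$ of them when $p\ge m+n$, which forces $m_1=n_1=0$ outright in the extremal situation. A second, smaller gap: in the sub-case where some $f(u)=-1$, the bound you invoke ($F_W\ge 2p$, $F_V\ge n$, $F_U\ge -m$) gives only $2\omega\ge m+3n$, which is below $3(m+n)$ whenever $m\ge 1$; you again need the forced-edge count at the weight-$(-1)$ vertex to limit how many vertices of $W$ can have weight exactly $2$ before the conclusion follows.
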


\begin{proof}
Consider the graph $K_{m,n,p}$ whose partite sets are $U,V$ and $W$.

\noindent{\bf Case 1.}\quad $m+n\equiv 0 \pmod 4$.\\
By assumption,
$$\begin{array}{l}
(m-1)(n+p - 3)/2+(n+p-5)/2 + n (m + p - 3)/2 \\
- p (m + n)/2=mn-1-(3m+3n)/2\end{array}$$
is even.
Label $(1/2)[mn-1-(3m+3n)/2]$ edges between $U$ and $V$ and $p(m+n)/2$ edges between
$U\cup V$ and $W$ with $-1$ in a similar fashion
described in the proof of Theorem \ref{p>m+n.eee}.
Then every vertex in $U$ is incident with $(n+p-3)/2$ negative edges except one vertex which is incident with
$(n+p-5)/2$ negative edges. Every vertices in $V$ is incident with $(m+p-3)/2$ negative edges and
every vertex in $W$ is incident with $(m+n)/2$ negative edges.
Label the remaining edges of $K_{n,m,p}$ by $+1$.
Then the weight of vertices in $U\cup V$ are all $3$ except one vertex of $U$
whose weight is $5$, and the weight of vertices in $W$ are all zero.
Hence, this labeling defines a SEDF $f$ with $w(f)=(3m+3n+2)/2$.

Note that since the weight of every vertex of $W$ is zero, no vertices in $U\cup V$ can
have weight one by (\ref{eq1}).
Now by Lemma \ref{eeo.ooe} and the facts that $f(W)=0$ and $f(r)=3$ for every $r\in U\cup V$ except one
vertex which has weight 5, it follows that $\gamma_s'(K_{m,n,p})=(3m+3n+2)/2$.
\vspace{3mm}

\noindent{\bf Case 2.}\quad $m+n\equiv 2 \pmod 4$.\\
By assumption, $$m (p + n - 3)/2 + n (m + p - 3)/2 - p (m + n)/2=mn-(3m+3n)/2$$
is even.
Label $(1/2)[mn-(3m+3n)/2]$ edges between $U$ and $V$ and $p(m+n)/2$ edges between
$U\cup V$ and $W$ with $-1$ in a similar fashion
described in the proof of Theorem \ref{p>m+n.eee}. Then every
vertex in $U$ is incident with $(n+p-3)/2$ negative edges,
every vertex in $V$ is incident with $(m+p-3)/2$ negative edges and
every vertex in $W$ is incident with $(m+n)/2$ negative edges.
Label the remaining edges of $K_{n,m,p}$ with $+1$.
Then the weight of vertices in $U\cup V$ are all $3$
and the weight of vertices in $W$ are all zero.
Hence, this labeling defines a SEDF $f$ with $w(f)=(3m+3n)/2$.

Note that since the weight of every vertex of $W$ is zero, no vertices in $U\cup V$ can
have weight one by (\ref{eq1}).
Now by Lemma \ref{eeo.ooe} and the facts that $f(W)=0$ and $f(r)=3$
for every $r\in U\cup V$, it follows that $\gamma_s'(K_{m,n,p})=(3m+3n)/2$.
\end{proof}

\begin{theorem}\label{p>=m+n+1.eeo}
{\rm
Let $m$ and $n$ be even, $(m,n)\neq (2,2)$, $p$ be odd and $p\geq m+n+1$.
Then
\begin{enumerate}
\item $\gamma_s'(K_{m,n,p})=\dfrac{3m+3n}{2}$ if $m+n\equiv 0 \pmod 4$,

\item $\gamma_s'(K_{m,n,p})=\dfrac{3m+3n+2}{2}$ if $m+n\equiv 2\pmod 4$.
\end{enumerate}
}
\end{theorem}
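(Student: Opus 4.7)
The plan is to mirror the structure of Theorem \ref{p>m+n.ooe} with the parities of the partite sets reversed: now $m,n$ are even and $p$ is odd, placing us in the second case of Lemma \ref{eeo.ooe}. The upper bound will come from an explicit construction and the lower bound from Lemma \ref{eeo.ooe} together with a forcing argument using (\ref{eq1}).

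For Case 1 ($m+n\equiv 0\pmod 4$) the target is a $\pm 1$ labeling in which every vertex of $U$ has $(n+p-3)/2$ negative incidences, every vertex of $V$ has $(m+p-3)/2$, and every vertex of $W$ has $(m+n)/2$. A direct count gives $f(u)=f(v)=3$ and $f(w)=0$, summing to total weight $(3m+3n)/2$. Solving the $3\times 3$ linear system for the numbers of negative edges between each pair of parts produces a triple of non-negative integers precisely because $m+n\equiv 0\pmod 4$, $(m,n)\ne(2,2)$, and $p\ge m+n+1$. The greedy balanced-placement routine from Theorem \ref{p>m+n.eee} then realizes the labeling, and (\ref{eq1}) is immediate since every endpoint-weight sum is at least $3$. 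For Case 2 ($m+n\equiv 2\pmod 4$), the target $(3m+3n)/2$ is odd while $w(f)$ has the parity of $|E(K_{m,n,p})|$, which is even, so the minimum must be at least $(3m+3n+2)/2$. I realize this value by perturbing the Case-1 construction, raising one vertex's weight by $2$: a single $U$- or $V$-vertex to weight $5$, or---in the borderline case $(m,n)=(2,4)$ where those choices drive a pairwise edge count negative---a single $W$-vertex to weight $2$, chosen so all pairwise negative-edge counts stay non-negative.

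For the lower bound, Lemma \ref{eeo.ooe} gives $f(w)\ge 0$ for every $w\in W$, and the odd parities of $\deg(u)=n+p$ and $\deg(v)=m+p$ force $f(u),f(v)$ to be odd. The crucial forcing observation from (\ref{eq1}) is that $f(u)=1$ together with $f(w)=0$ implies $f(uw)=-1$. Writing $U_1=\{u\in U:f(u)=1\}$ and $W_0=\{w\in W:f(w)=0\}$, this forces each $u\in U_1$ to carry at least $|W_0|$ negative edges to $W_0$; combined with the degree bound $(n+p-1)/2$ on negative incidences at such a vertex we obtain $|W_0|\le(n+p-1)/2$, hence $|W_+|\ge(p-n+1)/2$ whenever $U_1\ne\emptyset$, with an analogous statement for $V$. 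Combining these forced inequalities with Corollary \ref{equal_minus_multipartite} and the balance equations for negative edges between the parts, a short case analysis on $(|U_1|,|V_1|,|W_0|)$ yields $\sum_U f(u)+\sum_V f(v)+\sum_W f(w)\ge 3m+3n$, with equality only in the configuration $f(u)=f(v)=3$, $f(w)=0$ realized by the Case-1 construction. The even-parity constraint on $w(f)$ then sharpens this to $(3m+3n+2)/2$ in Case 2. The hardest part will be the bookkeeping in the mixed regime where both $U_1$ and $V_1$ are non-empty, since forcings then propagate from both sides; the borderline configuration $(m,n)=(2,4)$ in the Case-2 construction will also need separate treatment.
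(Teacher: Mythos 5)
Your upper-bound construction is the paper's: the same per-vertex negative-edge targets ($(n+p-3)/2$ on $U$, $(m+p-3)/2$ on $V$, $(m+n)/2$ on $W$), realized by the balanced placement of Theorem \ref{p>m+n.eee}, with one vertex perturbed in Case 2. Two of your additions are genuinely valuable. The parity observation in Case 2 ($w(f)\equiv|E|\equiv 0\pmod 2$ while $(3m+3n)/2$ is odd) derives the Case-2 value cleanly from the Case-1 lower bound, which the paper does not make explicit. And your flag on $(m,n)=(2,4)$ is a real catch: the paper's Case-2 recipe calls for $\tfrac12[mn-1-3(m+n)/2]=-1$ negative edges between $U$ and $V$ there, so it cannot be carried out as written, whereas your alternative perturbation (one $W$-vertex raised to weight $2$, leaving $0$ negative $UV$-edges) repairs it while keeping every endpoint-weight sum at least $2$.

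The gap is in the lower bound, which is the hard half. You must show $\sum_{x}f(x)\ge 3(m+n)$ for a balanced $\gamma_s'$-function $f$. The forcing you identify is correct ($f(u)=1$ and $f(w)=0$ force $f(uw)=-1$, hence $|W_0|\le(n+p-1)/2$ and $\sum_W f\ge p-n+1$ when $U_1\ne\emptyset$), but the announced ``short case analysis'' is not performed, and the inequalities you state do not close it: the deficit from $U\cup V$ is $2(|U_1|+|V_1|)$, and even after adding the further count that each $w\in W_0$ must absorb a negative edge from every vertex of $U_1\cup V_1$ among its $(m+n)/2$ negative edges (so $|U_1|+|V_1|\le(m+n)/2$ when $W_0\ne\emptyset$), one only reaches $\sum_x f(x)\ge 2(m+n)+(p-n+1)$, which falls short of $3(m+n)$ whenever $p<m+2n-1$ --- e.g.\ $m=n$ and $p=2n+1$. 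Additional forcing is needed to finish. You also never exclude $f(u)<0$ for $u\in U\cup V$; Lemma \ref{eeo.ooe} controls only $W$, and a weight $-1$ vertex in $U$ is not immediately contradictory. In fairness, the paper's own justification at this point (``no vertices in $U\cup V$ can have weight one'') is no more complete than yours, but as written your lower bound is an outline rather than a proof.
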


\begin{proof}
Consider the graph $K_{m,n,p}$ whose partite sets are $U,V$ and $W$.

\noindent{\bf Case 1.}\quad $m+n\equiv 0\pmod 4$.\\
By assumption,
$$\begin{array}{l}
m(n+p-3)/2+n(m+p-3)/2-p(m+n)/2=mn-(3m+3n)/2\\
\end{array}$$
is even.
Label $(1/2)[mn-3(m+n)/2]$ edges between $U$ and $V$ and $p(m+n)/2$ edges between
$U\cup V$ and $W$ with $-1$ as described in the proof of Theorem \ref{p>m+n.eee}.
Then every vertex in $U$ is incident with $(n+p-3)/2$ negative edges,
every vertex in $V$ is incident with $(m+p-3)/2$ negative edges and every vertex
in $W$ is incident with $(m+n)/2$ negative edges.
Label the remaining edges of $K_{n,m,p}$ by $+1$.
Then the weight of vertices in $U\cup V$ are all $3$
and the weight of vertices in $W$ are all zero.
Hence, this labeling defines a SEDF $f$ with $w(f)=(3m+3n)/2$.

Note that since the weight of every vertex of $W$ is zero, no vertices in $U\cup V$ can
have weight one by (\ref{eq1}).
Now by Lemma \ref{eeo.ooe} and the facts that $f(W)=0$ and $f(r)=3$
for every $r\in U\cup V$, it follows that $\gamma_s'(K_{m,n,p})=(3m+3n)/2$.
\vspace{3mm}

\noindent{\bf Case 2.}\quad $m+n\equiv 2\pmod 4$.\\
By assumption,
$$\begin{array}{l}
(m-1)(n+p-3)/2+(n+p-5)/2+n(m+p-3)/2\\-p(m+n)/2
=mn-1-(3m+3n)/2\\
\end{array}$$
is even.
Label $(1/2)[mn-1-3(m+n)/2]$ edges between $U$ and $V$ and $p(m+n)/2$ edges between
$U\cup V$ and $W$ with $-1$ as described in the proof of Theorem \ref{p>m+n.eee}. Then
every vertex in $U$ is incident with $(n+p-3)/2$ negative edges except one vertex which is
incident with $(n+p-5)/2$ negative edges, every vertex in
$V$ is incident with $(m+p-3)/2$ negative edges, and every vertex
in $W$ is incident with $(m+n)/2$ negative edges.
Label the remaining edges of $K_{n,m,p}$ with $+1$.
Then the weight of vertices in $U\cup V$ are all $3$ except one vertex whose weight is 5 and
the weight of vertices in $W$ are all zero.
Hence, this labeling defines a SEDF $f$ with $w(f)=(3m+3n+2)/2$.

Note that since the weight of every vertex of $W$ is zero, no vertices in $U\cup V$ can
have weight one by (\ref{eq1}).
Now by Lemma \ref{eeo.ooe} and the facts that $f(W)=0$, $f(u)=3$
for every vertex $u$ in $U$ except one vertex whose weight is 5,
and $f(v)=3$ for every $v\in V$, it follows that $\gamma_s'(K_{m,n,p})=(3m+3n+2)/2$.
\end{proof}

\begin{theorem}\label{p>=m+n+1.oee}
{\rm
Let $m$ be odd, $n,p$ be even, $3\leq m<n$ and $p\geq m+n+1$.
Then
\begin{enumerate}
\item $\gamma_s'(K_{m,n,p})=\dfrac{3m+2n+1}{2}$ if $m\equiv 1\pmod 4$,

\item $\gamma_s'(K_{m,n,p})=\dfrac{3m+2n-1}{2}$ if $m\equiv 3\pmod 4$
\end{enumerate}
}
\end{theorem}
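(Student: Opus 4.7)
The plan is to follow the construction-plus-lower-bound paradigm used in Theorems \ref{p>m+n.eee}--\ref{p>=m+n+1.eeo}: first produce an explicit SEDF of the claimed weight, then appeal to Lemma \ref{oee.eoe} to rule out any SEDF of smaller weight.

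For the construction, note that the parities of the vertex degrees ($n+p$ even, $m+p$ and $m+n$ odd) force $f(u)$ to be even while $f(v)$ and $f(w)$ are odd. I aim for $f(u)=2$ on every $u\in U$, $f(v)\in\{1,3\}$ on $V$, and $f(w)\in\{-1,1\}$ on $W$. Writing $a$ for the number of $v$ with $f(v)=3$ and $b$ for the number of $w$ with $f(w)=-1$, the identity $w(f)=\tfrac12\sum_x f(x)$ yields $w(f)=m+(n+p)/2+a-b$. Choosing $b-a=(p-m-n-1)/2$ in Case~1 and $b-a=(p-m-n+1)/2$ in Case~2 produces exactly the claimed weights; the residue $m\pmod 4$ is precisely what makes the resulting total of negative edges an integer. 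With the concrete choice $a=(m+n-1)/2$, the labelling is obtained greedily exactly as in the proof of Theorem \ref{p>m+n.eee}: first distribute the prescribed number of $-1$ labels between $U$ and $V$, then between $U\cup V$ and $W$, at each step selecting an edge whose endpoints still need additional negative incidences. Inequality (\ref{eq1}) is automatic at every edge; the critical cases are the edges from $W_{-1}=\{w:f(w)=-1\}$ to $U$ and to $V_1=\{v:f(v)=1\}$, where $f(u)+f(w)=1$ and $f(v)+f(w)=0$ force the edge to be negative. The constraint $a\ge(m+n-1)/2$ is exactly what makes the count $a_w=(m+n+1)/2$ at each such $w$ simultaneously achievable: the $m$ forced negatives to $U$ together with the $n-a$ forced negatives to $V_1$ leave room for $a-(m+n-1)/2\ge 0$ further negatives going to $V_3$.

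For the lower bound, let $f$ be a $\gamma_s'(K_{m,n,p})$-function. Lemma \ref{oee.eoe} gives $f(w)\ge -1$ for every $w\in W$. Combining (\ref{eq1}) with parity of degrees yields $f(u)+f(v)\ge 1$, $f(u)+f(w)\ge 1$, and $f(v)+f(w)\ge 0$ on every edge. If some $w$ has $f(w)=-1$, then $f(u)\ge 2$ for every $u$ and $f(v)\ge 1$ for every $v$, so
$$2w(f)=\sum_u f(u)+\sum_v f(v)+\sum_w f(w)\ge 2m+n+2a+(p-2b),$$
where $a$ counts the $v$ with $f(v)\ge 3$ and $b$ the $w$ with $f(w)=-1$. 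A counting argument on the number of negative edges then forces $b-a\le(p-m-n-1)/2$ in Case~1 and $b-a\le(p-m-n+1)/2$ in Case~2: each $w$ with $f(w)=-1$ contributes at least $m+(n-a)$ negatives among its forced edges, and the total negative-edge count must be an integer whose parity is governed by $m\bmod 4$. Substituting yields the matching lower bound. The subcase $b=0$ is handled by the same inequality.

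The main obstacle I anticipate is exactly this integer-counting step: matching the upper bound on $b-a$ to the value produced by the construction, which requires tracking the parity of $m\bmod 4$ with care. This is the same phenomenon that distinguished the two subcases in Theorems \ref{p>m+n.ooe} and \ref{p>=m+n+1.eeo}. The corner case $p=m+n+1$ (where in Case~1 one may take $a=b=0$, while Case~2 still forces $a,b>0$) merits a separate verification to confirm that the same formula continues to hold.
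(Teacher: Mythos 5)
Your construction is essentially the paper's: prescribe vertex weights $2$ on $U$, $\{1,3\}$ on $V$ and $\{-1,1\}$ on $W$, verify the arithmetic identity that makes the number of $U$--$V$ negative edges a nonnegative integer, and distribute the negative labels greedily as in Theorem~\ref{p>m+n.eee}. Your parameter choice $a=(m+n-1)/2$ differs from the paper's $(m+n+1)/2$ in Case~1, but both are feasible, so the upper-bound half is fine.

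The genuine gap is in the lower bound. The inequality you need, $b-a\le (p-m-n-1)/2$ in Case~1, does not follow from the two ingredients you cite (the forced-edge count $m+(n-a)\le(m+n+1)/2$ at each negative-weight $w$, and integrality of the negative-edge totals). Concretely, take $K_{5,12,18}$, so $m\equiv 1\pmod 4$ and $(p-m-n-1)/2=0$. Put $f(u)=2$ on all of $U$, $f(v)=1$ on a set $V_1$ of $4$ vertices of $V$ and $f(v)=3$ on the remaining $8$ (so $a=8$), $f(w)=-1$ on a set $W_-$ of $10$ vertices of $W$ and $f(w)=+1$ on the remaining $8$ (so $b=10$ and $b-a=2$). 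The forced edges ($U\times W_-$ and $V_1\times W_-$) exactly exhaust the $(m+n+1)/2=9$ negative slots at each $w\in W_-$, every negative-edge count is an integer, and the required negative subgraph is realizable: take all edges of $U\times W_-$, $V_1\times W_-$ and $V_2\times W_+$ negative, together with the $4$ edges from $V_1$ to one fixed vertex of $U$ and a $(2,4)$-biregular set of $16$ edges between $V_2$ and the other four vertices of $U$. A direct check gives $f[e]\ge 1$ on every edge (the tight edges are $V_1\times W_-$, $V_2\times W_-$ and $V_1\times W_+$), while $w(f)=18<20=(3m+2n+1)/2$. So the counting step you propose cannot close the argument, and this configuration indicates that the lower bound requires some further idea beyond Lemma~\ref{oee.eoe} plus forced-edge and parity counts. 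Be aware that the paper's own lower-bound justification here is a single sentence describing the constructed function rather than bounding an arbitrary $\gamma_s'$-function, so you cannot repair the gap by appealing to the paper either; the missing upper bound on $b$ in terms of $a$ is a substantive obstruction, not a routine verification.
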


\begin{proof}
Consider the graph $K_{m,n,p}$ whose partite sets are $U,V$ and $W$.

\noindent{\bf Case 1.}\quad $m\equiv 1\pmod 4$.\\
By assumption,
$$\begin{array}{l}
m(n+p-2)/2+((n-m-1)/2)(m+p-1)/2\\+((n+m+1)/2)(m+p-3)/2-(p/2)(m+n+1)/2\\
-(p/2)(m+n-1)/2=mn-n-(3m+1)/2
\end{array}$$
is even.
Partition $V$ into $V_1$ and $V_2$ with $|V_1\mid=(n-m-1)/2$ and $|V_2\mid=(n+m+1)/2$.
Also partition $W$ into $W_1$ and $W_2$ with $|W_1\mid=|W_2\mid$.
In a similar fashion described in the proof of Theorem \ref{p>m+n.eee},
label $(1/2)[mn-n-(3m+1)/2]$ edges between $U$ and $V$ and
$(p/2)(m+n+1)/2+(p/2)(m+n-1)/2$ edges between $U\cup V$ and $W$ with $-1$
such that the edges between $V_1$ and $W_1$ are all negative edges. In addition,
every vertex in $U$ is incident with $(n+p-2)/2$ negative edges,
every vertex in $V_1, V_2$ is incident with $(m+p-1)/2$ and $(m+p-3)/2$ negative edges, respectively,
and every vertex in $W_1, W_2$ is incident with $(m+n+1)/2$ and $(m+n-1)/2$ negative edges, respectively.
Label the remaining edges of $K_{n,m,p}$ by $+1$.
Then the weight of vertices in $U$ are all $2$,
the weight of vertices in $V_1, V_2$ are all $1,3$, respectively,
and the weight of vertices in $W_1$ are all $-1$ and in $W_2$ are $+1$.
Hence, this labeling defines a SEDF $f$ with $w(f)=(3m+2n+1)/2$.

Note that since the weight of some vertices in $W$ is $-1$, no vertices in $U\cup V$ can
have weight zero or $-1$ by (\ref{eq1}).
Now by Lemma \ref{oee.eoe} and the facts that $f(W)=0$, $f(u)=2$
for every $u\in U$, $f(v)=1$ for every $v\in V_1$ and $f(v)=3$ for every $v\in V_2$, it follows that $\gamma_s'(K_{m,n,p})=(3m+2n+1)/2$.
\vspace{3mm}

\noindent{\bf Case 2.}\quad $m\equiv 3\pmod 4$.\\
By assumption,
$$\begin{array}{l}
m(n+p-2)/2+((n-m+1)/2)(m+p-1)/2\\+((n+m-1)/2)(m+p-3)/2-(p/2)(m+n+1)/2\\-(p/2)(m+n-1)/2=
mn-n-(3m-1)/2
\end{array}$$
is even.
Partition $V$ into $V_1$ and $V_2$ with $|V_1\mid=(n-m+1)/2$ and $|V_2\mid=(n+m-1)/2$.
Also partition, $W$ into $W_1$ and $W_2$ with $|W_1\mid=|W_2\mid$.
In a similar fashion described in the proof of Theorem \ref{p>m+n.eee},
label $(1/2)[mn-n-(3m-1)/2]$ edges between $U$ and $V$ and
$(p/2)(m+n+1)/2+(p/2)(m+n-1)/2$ edges between $U\cup V$ and $W$ with $-1$
such that the edges between $V_1$ and $W_1$ are all negative edges.
In addition, every vertex in $U$ is incident with $(n+p-2)/2$ negative edges,
every vertex in $V_1, V_2$ is incident with $(m+p-1)/2$ and $(m+p-3)/2$ negative edges, respectively,
and every vertex in $W_1, W_2$ is incident with $(m+n+1)/2$ and $(m+n-1)/2$ negative edges, respectively.
Label the remaining edges of $K_{n,m,p}$ by $+1$.
Then the weight of vertices in $U$ are all $2$,
the weight of vertices in $V_1, V_2$ are all $1,3$, respectively,
and the weight of vertices in $W_1$ are all $-1$ and in $W_2$ are $+1$.
Hence, this labeling defines a SEDF $f$ with $w(f)=(3m+2n-1)/2$.

Note that since the weight of some vertices in $W$ is $-1$, no vertices in $U\cup V$ can
have weight zero or $-1$ by (\ref{eq1}).
Now by Lemma \ref{oee.eoe} and the facts that $f(W)=0$, $f(u)=2$
for every $u\in U$, $f(v)=1$ for every $v\in V_1$ and $f(v)=3$ for every $v\in V_2$, it follows that $\gamma_s'(K_{m,n,p})=(3m+2n-1)/2$.
\end{proof}

\begin{theorem}\label{p>=m+n+1.eoe}
{\rm
Let $m,p$ be even, $n$ be odd, $m<n$ and $p\geq m+n+1$.
Then
\begin{enumerate}
\item $\gamma_s'(K_{m,n,p})=\dfrac{2m+3n+1}{2}$ if $n\equiv 1\pmod 4$,
\item $\gamma_s'(K_{m,n,p})=\dfrac{2m+3n-1}{2}$ if $n\equiv 3\pmod 4$.
\end{enumerate}
}
\end{theorem}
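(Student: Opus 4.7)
The plan is to follow directly the template of Theorem \ref{p>=m+n+1.oee}, interchanging the roles of the small-odd partite set $U$ (there) with the large-odd partite set $V$ (here). Since $m,p$ are even and $n$ is odd, the vertex degrees in $U, V, W$ are $n+p$, $m+p$, $m+n$, i.e.\ odd, even, odd respectively, so any SEDF $f$ satisfies $f(u), f(w)$ odd and $f(v)$ even. The goal is to construct an explicit SEDF attaining the claimed weight and then invoke Lemma \ref{oee.eoe} for the matching lower bound.

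For Case 1 ($n\equiv 1\pmod 4$), I would partition $V = V_1 \cup V_2$ with $|V_1| = (n+m-1)/2$, $|V_2| = (n-m+1)/2$, and split $W = W_1 \cup W_2$ with $|W_1|=|W_2|=p/2$. Then I prescribe: every $u\in U$ incident with $(n+p-3)/2$ negative edges, every $v\in V_1$ with $(m+p-2)/2$, every $v\in V_2$ with $(m+p-4)/2$, every $w\in W_1$ with $(m+n+1)/2$, and every $w\in W_2$ with $(m+n-1)/2$. A routine parity count using the identity that $2e(U,V)$ equals the sum of per-vertex negative-edge counts over $U\cup V$ minus the corresponding sum over $W$ confirms the required number of $U\text{--}V$ negative edges is a nonnegative integer under the hypotheses. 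The greedy labeling scheme of Theorem \ref{p>m+n.eee}, with the additional stipulation that all $V_1\text{--}W_1$ edges be negative (forced because $f(v)+f(w)=1$ for such an edge), realizes these counts. Vertex weights come out $f(u)=3$, $f(v)=2$ on $V_1$, $f(v)=4$ on $V_2$, $f(w)=-1$ on $W_1$, $f(w)=+1$ on $W_2$, so $f$ is an SEDF by (\ref{eq1}) with total weight $(2m+3n+1)/2$. Case 2 ($n\equiv 3\pmod 4$) is entirely analogous with $|V_1|=(n+m+1)/2$ and $|V_2|=(n-m-1)/2$, lowering the total to $(2m+3n-1)/2$.

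For the lower bound I would invoke Lemma \ref{oee.eoe} to get $f(w)\geq -1$ for every $w\in W$ in any $\gamma_s'(K_{m,n,p})$-function. Combining this with the parity restrictions and the edge inequalities $f(u)+f(w)\geq 0$, $f(v)+f(w)\geq 0$ from (\ref{eq1}), the existence of any $w\in W$ with $f(w)=-1$ forces $f(u)\geq 1$ and $f(v)\geq 2$, and a balancing argument paralleling the conclusion of Theorem \ref{p>=m+n+1.oee} then pins $\sum_v f(v)=2w(f)$ at exactly the claimed minimum. The main obstacle I anticipate is making this lower-bound deduction rigorous: one must show that no redistribution (e.g.\ lifting some $f(w)$ from $-1$ to $+1$ at the cost of lowering weights in $U$ or $V$) can beat the claimed bound while still satisfying every edge constraint. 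The upper-bound construction itself is a direct mirror of Theorem \ref{p>=m+n+1.oee}'s and should cause no real trouble beyond careful bookkeeping in small cases such as $m=2$, $n=3$, where $|V_2|$ degenerates to $0$ in Case 2.
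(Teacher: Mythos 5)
Your proposal is correct and follows essentially the same route as the paper's proof: prescribe per-vertex negative-edge counts realized by the greedy scheme of Theorem \ref{p>m+n.eee} (with the forced negative $V_1$--$W_1$ edges), then obtain the matching lower bound from Lemma \ref{oee.eoe} together with the parity of $f(u),f(v),f(w)$ and inequality (\ref{eq1}). The only difference is cosmetic: the paper gives every vertex of $U$ weight $1$ and makes the weight-$4$ part of $V$ the larger piece, whereas you give $U$ weight $3$ and enlarge the weight-$2$ part; both distributions sum to the same total, and the lower-bound step you flag as delicate is treated at exactly the same level of brevity in the paper itself.
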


\begin{proof}
Consider the graph $K_{m,n,p}$ whose partite sets are $U,V$ and $W$.

\noindent{\bf Case 1.}\quad $n\equiv 1\pmod 4$.\\
By assumption,
$$\begin{array}{l}
m(n+p-1)/2+((n-m-1)/2)(m+p-2)/2\\+((n+m+1)/2)(m+p-4)/2-(p/2)(m+n+1)/2\\-(p/2)(m+n-1)/2=
mn-m-(3n+1)/2
\end{array}$$
is even.
Partition $V$ into $V_1$ and $V_2$ with $|V_1\mid=(n-m-1)/2$ and $|V_2\mid=(n+m+1)/2$.
Also partition, $W$ into $W_1$ and $W_2$ with $|W_1\mid=|W_2\mid$.
In a similar fashion described in the proof of Theorem \ref{p>m+n.eee},
label $(1/2)[mn-n-(3n+1)/2]$ edges between $U$ and $V$ and
$(p/2)(m+n+1)/2+(p/2)(m+n-1)/2$ edges between $U\cup V$ and $W$ with $-1$
such that the edges between $V_1$ and $W_1$ are all negative edges.
In addition, every vertex in $U$ is incident with $(n+p-1)/2$ negative edges,
every vertex in $V_1, V_2$ is incident with $(m+p-2)/2$ and $(m+p-4)/2$ negative edges, respectively,
and every vertex in $W_1, W_2$ is incident with $(m+n+1)/2$ and $(m+n-1)/2$ negative edges, respectively.
Label the remaining edges of $K_{n,m,p}$ by $+1$.
Then the weight of vertices in $U$ are all $1$,
the weight of vertices in $V_1, V_2$ are all $2,4$, respectively,
the weight of vertices in $W_1$ are all $-1$ and in $W_2$ are $+1$.
Hence, this labeling defines a SEDF $f$ with $w(f)=(2m+3n+1)/2$.

Note that since the weight of some vertices in $W$ is $-1$, no vertices in $U\cup V$ can
have weight zero or $-1$ by (\ref{eq1}).
Now by Lemma \ref{oee.eoe} and the facts that $f(W)=0$, $f(u)=1$
for every $u\in U$, $f(v)=2$ for every $v\in V_1$ and $f(v)=4$ for every $v\in V_2$, it follows that $\gamma_s'(K_{m,n,p})=(2m+3n+1)/2$.
\vspace{3mm}

\noindent{\bf Case 2.}\quad $n\equiv 3\pmod 4$.\\
By assumption,
$$\begin{array}{l}
m(n+p-1)/2+((n-m-1)/2)(m+p-2)/2\\+((n+m+1)/2)(m+p-4)/2-(p/2)(m+n+1)/2\\-(p/2)(m+n-1)/2=
mn-m-(3n-1)/2
\end{array}$$
is even.
Partition $V$ into $V_1$ and $V_2$ with $|V_1\mid=(n-m+1)/2$ and $|V_2\mid=(n+m-1)/2$.
Also partition, $W$ into $W_1$ and $W_2$ with $|W_1\mid=|W_2\mid$.
In a similar fashion described in the proof of Theorem \ref{p>m+n.eee},
label $(1/2)[mn-n-(3n+1)/2]$ edges between $U$ and $V$ and
$(p/2)(m+n+1)/2+(p/2)(m+n-1)/2$ edges between $U\cup V$ and $W$ with $-1$
such that the edges between $V_1$ and $W_1$ are all negative edges.
In addition, every vertex in $U$ is incident with $(n+p-1)/2$ negative edges,
every vertex in $V_1, V_2$ is incident with $(m+p-2)/2$ and $(m+p-4)/2$ negative edges, respectively,
and every vertex in $W_1, W_2$ is incident with $(m+n+1)/2$ and $(m+n-1)/2$ negative edges, respectively.
Label the remaining edges of $K_{n,m,p}$ by $+1$.
Then the weight of vertices in $U$ are all $1$,
the weight of vertices in $V_1, V_2$ are all $2,4$, respectively,
and the weight of vertices in $W_1$ are all $-1$ and in $W_2$ are $+1$.
Hence, this labeling defines a SEDF $f$ with $w(f)=(2m+3n-1)/2$.

Note that since the weight of some vertices in $W$ is $-1$, no vertices in $U\cup V$ can
have weight zero or $-1$ by (\ref{eq1}).
Now by Lemma \ref{oee.eoe} and the facts that $f(W)=0$, $f(u)=1$
for every $u\in U$, $f(v)=2$ for every $v\in V_1$ and $f(v)=4$ for every $v\in V_2$, it follows that $\gamma_s'(K_{m,n,p})=(2m+3n-1)/2$.
\end{proof}

\begin{theorem}\label{p>=m+n+1.oeo}
{\rm
Let $m,p$ be odd, $n$ be even, $3\leq m<n$, and $p\geq m+n$.
Then
\begin{enumerate}
\item $\gamma_s'(K_{m,n,p})=\dfrac{2m+3n}{2}$ if $n\equiv 0 \pmod 4$,

\item $\gamma_s'(K_{m,n,p})=\dfrac{2m+3n-2}{2}$ if $n\equiv 2 \pmod 4$,
\end{enumerate}
}
\end{theorem}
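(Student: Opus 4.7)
My plan is to mirror the proof of Theorem~\ref{p>=m+n+1.eoe}, adapted to the new parity configuration ($m,p$ odd and $n$ even rather than $m,p$ even and $n$ odd). For each residue of $n$ modulo $4$ I would exhibit an explicit SEDF $f$ of the asserted weight, then appeal to Lemma~\ref{oeo.eoo} for the matching lower bound.

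In Case~1 ($n\equiv 0\pmod 4$) I would partition $V=V_1\cup V_2$ with $|V_1|=(n-m+1)/2$ and $|V_2|=(n+m-1)/2$ (both integers since $m$ is odd and $n$ is even, and both positive since $3\le m<n$), and $W=W_1\cup W_2$ with $|W_1|=(p-1)/2$ and $|W_2|=(p+1)/2$. A short arithmetic check shows the identity
\[
m\tfrac{n+p-1}{2}+|V_1|\tfrac{m+p-2}{2}+|V_2|\tfrac{m+p-4}{2}-|W_1|\tfrac{m+n+1}{2}-|W_2|\tfrac{m+n-1}{2}=mn-m-\tfrac{3n}{2}+1,
\]
whose right-hand side is even when $m$ is odd and $n\equiv 0\pmod 4$. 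This is the parity condition needed to run the greedy negative-edge labelling of Theorem~\ref{p>m+n.eee}: label $\tfrac{1}{2}(mn-m-\tfrac{3n}{2}+1)$ edges between $U$ and $V$ and $|W_1|(m+n+1)/2+|W_2|(m+n-1)/2$ edges between $U\cup V$ and $W$ with $-1$, and impose additionally that every edge in $V_1\times W_1$ be negative. The resulting per-vertex degrees force $f(u)=1$ on $U$, $f(v)=2$ on $V_1$, $f(v)=4$ on $V_2$, $f(w)=-1$ on $W_1$, $f(w)=+1$ on $W_2$, so $w(f)=(2m+3n)/2$. Case~2 ($n\equiv 2\pmod 4$) is handled by the mirror construction in which the sizes of $W_1$ and $W_2$ are swapped; the corresponding parity identity becomes $mn-m-3n/2$, still even, and the SEDF has $w(f)=(2m+3n-2)/2$.

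To verify $f$ is an SEDF, it suffices by (\ref{eq1}) to check $f[e]\ge 1$ for each edge $e$; since every $u\in U$ has weight $\ge 1$, every $v\in V$ has weight $\ge 2$, and every $w\in W$ has weight $\ge -1$, the sum $f(r)+f(s)$ is $\ge 0$ across each edge $rs$, and any edge with sum $0$ or $1$ has been assigned $-1$ by the construction. For the matching lower bound, Lemma~\ref{oeo.eoo} gives $f(w)\ge -1$ for every $w\in W$; combined with Corollary~\ref{equal_minus_multipartite} and the forced parities $f(u),f(w)$ odd and $f(v)$ even (from $\deg(u)=n+p$ and $\deg(w)=m+n$ odd and $\deg(v)=m+p$ even), a case analysis on the at most two distinct vertex-weight values in each partite set forces $2w(f)\ge 2m+3n$ in Case~1 and $\ge 2m+3n-2$ in Case~2.

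The main obstacle I anticipate is verifying combinatorial feasibility of the greedy labelling under the extra constraint that all edges of $V_1\times W_1$ are negative: after fixing those $|V_1||W_1|$ edges, the remaining per-vertex negative-edge quotas for $U,V,W$ must still be simultaneously satisfiable, which requires a careful inspection of the greedy ordering rules of Theorem~\ref{p>m+n.eee}. The parity identities above ensure the global totals match. A secondary challenge is sharpening the lower bound: Lemma~\ref{oeo.eoo} alone only precludes $f(w)\le -2$, so turning it into the exact constants $(2m+3n)/2$ and $(2m+3n-2)/2$ requires combining the bound with the mandatory parities of $f$ on each partite set and the inequality $f(u)+f(v)\ge 0$ across every edge.
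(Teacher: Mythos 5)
Your proposal follows essentially the same route as the paper's proof: the greedy negative-edge construction of Theorem~\ref{p>m+n.eee} applied to a partition $V=V_1\cup V_2$, $W=W_1\cup W_2$ with all of $V_1\times W_1$ forced negative, a parity check on the resulting identity, and a lower bound drawn from Lemma~\ref{oeo.eoo} together with the vertex-weight parities and inequality (\ref{eq1}). Your Case~1 sizes $|V_1|=(n-m+1)/2$, $|W_1|=(p-1)/2$ differ cosmetically from the paper's $(n-m-1)/2$ and $(p+1)/2$ (both rebalancings give total vertex weight $2m+3n$, and your identity $mn-m-3n/2+1$ is indeed even when $n\equiv 0\pmod 4$), while your Case~2 coincides with the paper's; the feasibility and lower-bound details you flag as remaining obstacles are treated at the same level of brevity in the paper itself.
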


\begin{proof}
Consider the graph $K_{m,n,p}$ whose partite sets are $U,V$ and $W$.

\noindent{\bf Case 1.}\quad $n\equiv 0\pmod 4$.\\
By assumption,
$$\begin{array}{l}
m(n+p-1)/2+((n-m-1)/2)(m+p-2)/2\\+((n+m+1)/2)(m+p-4)/2-((p+1)/2)(m+n+1)/2\\-((p-1)/2)(m+n-1)/2
=mn-m-(3n+2)/2
\end{array}$$
is even.
Partition $V$ into $V_1$ and $V_2$ with $|V_1\mid=(n-m-1)/2$ and $|V_2\mid=(n+m+1)/2$.
Also partition $W$ into $W_1$ and $W_2$ with $|W_1\mid=|W_2\mid+1$.
In a similar fashion described in the proof of Theorem \ref{p>m+n.eee},
label $(1/2)[mn-m-(3n+2)/2]$ edges between $U$ and $V$ and
$((p+1)/2)(m+n+1)/2+((p-1)/2)(m+n-1)/2$ edges between $U\cup V$ and $W$ with $-1$
such that the edges between $V_1$ and $W_1$ are all negative edges.
In addition, every vertex in $U$ is incident with $(n+p-1)/2$ negative edges,
every vertex in $V_1, V_2$ is incident with $(m+p-2)/2$ and $(m+p-4)/2$ negative edges, respectively,
and every vertex in $W_1, W_2$ is incident with $(m+n+1)/2$ and $(m+n-1)/2$ negative edges, respectively.
Label the remaining edges of $K_{n,m,p}$ by $+1$.
Then the weight of vertices in $U$ are all $1$,
the weight of vertices in $V_1, V_2$ are all $2,4$, respectively,
and the weight of vertices in $W_1$ are $-1$ and in $W_2$ are $+1$.
Hence, this labeling defines a SEDF $f$ with $w(f)=(2m+3n)/2$.

Note that since the weight of some vertices in $W$ is $-1$, no vertices in $U\cup V$ can
have weight zero or $-1$ by (\ref{eq1}).
Now by Lemma \ref{oeo.eoo} and the facts that $f(W)=-1$, $f(u)=1$
for every $u\in U$, $f(v)=2$ for every $v\in V_1$ and $f(v)=4$ for every $v\in V_2$, it follows that $\gamma_s'(K_{m,n,p})=(2m+3n)/2$.
\vspace{3mm}

\noindent{\bf Case 2.}\quad $n\equiv 2\pmod 4$.\\
By assumption,
$$\begin{array}{l}
m(n+p-1)/2+((n-m+1)/2)(m+p-2)/2\\+((n+m-1)/2)(m+p-4)/2-((p+1)/2)(m+n+1)/2\\-((p-1)/2)(m+n-1)/2
=mn-m-3n/2
\end{array}$$
is even.
Partition $V$ into $V_1$ and $V_2$ with $|V_1\mid=(n-m+1)/2$ and $|V_2\mid=(n+m-1)/2$.
Also partition $W$ into $W_1$ and $W_2$ with $|W_1\mid=|W_2\mid+1$.
In a similar fashion described in the proof of Theorem \ref{p>m+n.eee},
label $(1/2)[mn-m-3n/2]$ edges between $U$ and $V$ and
$((p+1)/2)(m+n+1)/2+((p-1)/2)(m+n-1)/2$ edges between $U\cup V$ and $W$ with $-1$
such that the edges between $V_1$ and $W_1$ are all negative edges.
In addition, every vertex in $U$ is incident with $(n+p-1)/2$ negative edges,
every vertex in $V_1, V_2$ is incident with $(m+p-2)/2$ and $(m+p-4)/2$ negative edges, respectively,
and every vertex in $W_1, W_2$ is incident with $(m+n+1)/2$ and $(m+n-1)/2$ negative edges, respectively.
Label the remaining edges of $K_{n,m,p}$ by $+1$.
Then the weight of vertices in $U$ are all $1$,
the weight of vertices in $V_1, V_2$ are all $2,4$, respectively,
and the weight of vertices in $W_1$ are all $-1$ and in $W_2$ are $+1$.
Hence, this labeling defines a SEDF $f$ with $w(f)=(2m+3n-2)/2$.

Note that since the weight of some vertices in $W$ is $-1$, no vertices in $U\cup V$ can
have weight zero or $-1$ by (\ref{eq1}).
Now by Lemma \ref{oeo.eoo} and the facts that $f(W)=-1$, $f(u)=1$
for every $u\in U$, $f(v)=2$ for every $v\in V_1$ and $f(v)=4$ for every $v\in V_2$, it follows that $\gamma_s'(K_{m,n,p})=(2m+3n-2)/2$.
\end{proof}

\begin{theorem}\label{p>=m+n+1.eoo}
{\rm
Let $m$ be even, $n,p$ be odd, $m<n$, and $p\geq m+n$.
Then there is an SEDF $f$ of $K_{m,n,p}$ with
\begin{enumerate}
\item $\gamma_s'(K_{m,n,p})=\dfrac{3m+2n}{2}$ if $m\equiv 0 \pmod 4$,

\item $\gamma_s'(K_{m,n,p})=\dfrac{3m+2n-2}{2}$ if $m\equiv 2 \pmod 4$.
\end{enumerate}
}
\end{theorem}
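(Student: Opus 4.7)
The plan is to mirror the two-stage proof of Theorem \ref{p>=m+n+1.oeo}, swapping the roles of $m$ and $n$ to fit the current parity profile (here $m$ is even while $n,p$ are odd). For the upper bound I construct an explicit SEDF of $K_{m,n,p}$ achieving the claimed value; for the lower bound I invoke Lemma \ref{oeo.eoo}, which is stated precisely for this case and gives $g(w)\geq -1$ on $W$ for any $\gamma_s'(K_{m,n,p})$-function $g$.

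The target vertex-weight profile is $f(u)=2$ for every $u\in U$, $f(v)=1$ for $v\in V_1$ and $f(v)=3$ for $v\in V_2$, together with $f(w)=-1$ for $w\in W_1$ and $f(w)=+1$ for $w\in W_2$; the parities match because $\deg(u)=n+p$ is even while $\deg(v)=m+p$ and $\deg(w)=m+n$ are odd. In Case 1 ($m\equiv 0\pmod 4$) I take $|V_1|=(n-m-1)/2$, $|V_2|=(n+m+1)/2$, $|W_1|=(p+1)/2$, $|W_2|=(p-1)/2$; in Case 2 ($m\equiv 2\pmod 4$) I take $|V_1|=(n-m+1)/2$, $|V_2|=(n+m-1)/2$ with the same $W$-partition. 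A direct count, in the style of the identities displayed in Theorem \ref{p>=m+n+1.oeo}, shows that the required number of negative $U$--$V$ edges equals $(mn-n-(3m+2)/2)/2$ in Case 1 and $(mn-n-3m/2)/2$ in Case 2, each a non-negative integer under the relevant congruence on $m$ and the hypotheses $m<n$, $p\geq m+n$. Labelling this many $U$--$V$ edges with $-1$ by the greedy recipe of Theorem \ref{p>m+n.eee}, then assigning $-1$ to the appropriate number of $U\cup V$ to $W$ edges subject to the extra requirement that every edge of $V_1\times W_1$ be negative (forced by $f(v)+f(w)=0$ together with (\ref{eq1})), and setting $+1$ on every remaining edge, yields an SEDF $f$ whose summed vertex weights give $w(f)=(3m+2n)/2$ in Case 1 and $w(f)=(3m+2n-2)/2$ in Case 2.

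For optimality, once Lemma \ref{oeo.eoo} supplies $g(w)\geq -1$ on $W$, the parity of degrees forces $g(u)$ even and $g(v),g(w)$ odd, while (\ref{eq1}) rules out any $r\in U\cup V$ having weight $0$ or $-1$ as soon as some $W$-vertex attains weight $-1$. Summing the resulting per-vertex lower bounds matches the constructed weight, giving the stated equalities.

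The main obstacle will be verifying the feasibility of the greedy labelling under the extra constraint that the entire bipartite block $V_1\times W_1$ is negative: after committing those $|V_1|\cdot|W_1|$ forced edges, the residual negative-degree sequences on the pairs $U$--$V$, $U$--$W$, $V_1$--$W_2$ and $V_2$--$W$ must remain non-negative and be simultaneously realisable. This is a Gale--Ryser style check, and the hypothesis $p\geq m+n$ together with $m<n$ supplies precisely the slack needed, exactly as in the analogous verifications for Theorems \ref{p>=m+n+1.oee}--\ref{p>=m+n+1.oeo}.
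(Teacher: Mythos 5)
Your proposal takes essentially the same route as the paper's own proof: the identical target vertex-weight profile ($2$ on $U$, $1$ on $V_1$, $3$ on $V_2$, $-1$ on $W_1$, $+1$ on $W_2$), the same greedy labelling with the forced negative block $V_1\times W_1$, and the same appeal to Lemma \ref{oeo.eoo} together with parity and (\ref{eq1}) for optimality. The only divergence is in Case 1, where you take $|V_1|=(n-m-1)/2$ and $|W_1|=(p+1)/2$ while the paper uses $|V_1|=(n-m+1)/2$ and $|W_1|=(p-1)/2$; both choices give integer negative-edge counts under $m\equiv 0\pmod 4$ and the same total weight $(3m+2n)/2$, so this is a harmless variant rather than a different method.
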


\begin{proof}
Consider the graph $K_{m,n,p}$ whose partite sets are $U,V$ and $W$.

\noindent{\bf Case 1.}\quad $m\equiv 0\pmod 4$.\\
By assumption,
$$\begin{array}{l}
m(n+p-2)/2+((n-m+1)/2)(m+p-1)/2\\+((n+m-1)/2)(m+p-3)/2-((p-1)/2)(m+n+1)/2\\-((p+1)/2)(m+n-1)/2
=mn-n-(3m-2)/2
\end{array}$$
is even.
Partition $V$ into $V_1$ and $V_2$ with $|V_1\mid=(n-m+1)/2$ and $|V_2\mid=(n+m-1)/2$.
Also partition $W$ into $W_1$ and $W_2$ with $|W_1\mid=|W_2\mid-1$.
In a similar fashion described in the proof of Theorem \ref{p>m+n.eee},
label $(1/2)[mn-n-(3m-2)/2]$ edges between $U$ and $V$ and
$((p-1)/2)(m+n+1)/2+((p+1)/2)(m+n-1)/2$ edges between $U\cup V$ and $W$ with $-1$
such that the edges between $V_1$ and $W_1$ are all negative edges.
In addition, every vertex in $U$ is incident with $(n+p-2)/2$ negative edges,
every vertex in $V_1, V_2$ is incident with $(m+p-1)/2$ and $(m+p-3)/2$ negative edges, respectively,
and every vertex in $W_1, W_2$ is incident with $(m+n+1)/2$ and $(m+n-1)/2$ negative edges, respectively.
Label the remaining edges of $K_{n,m,p}$ by $+1$.
Then the weight of vertices in $U$ are all $2$,
the weight of vertices in $V_1, V_2$ are all $1,3$, respectively,
and the weight of vertices in $W_1$ are all $-1$ and in $W_2$ are $+1$.
Hence, this labeling defines a SEDF $f$ with $w(f)=(3m+2n)/2$.

Note that since the weight of some vertices in $W$ is $-1$, no vertices in $U\cup V$ can
have weight zero or $-1$ by (\ref{eq1}).
Now by Lemma \ref{oeo.eoo} and the facts that $f(W)=1$, $f(u)=2$
for every $u\in U$, $f(v)=1$ for every $v\in V_1$ and $f(v)=3$ for every $v\in V_2$, it follows that $\gamma_s'(K_{m,n,p})=(3m+2n)/2$.
\vspace{3mm}

\noindent{\bf Case 2.}\quad $m\equiv 2\pmod 4$.\\
By assumption,
$$\begin{array}{l}
m(n+p-2)/2+((n-m+1)/2)(m+p-1)/2\\+((n+m-1)/2)(m+p-3)/2-((p+1)/2)(m+n+1)/2\\-((p-1)/2)(m+n-1)/2
=mn-n-(3m)/2
\end{array}$$
is even.
Partition $V$ into $V_1$ and $V_2$ with $|V_1\mid=(n-m+1)/2$ and $|V_2\mid=(n+m-1)/2$.
Also partition $W$ into $W_1$ and $W_2$ with $|W_1\mid=|W_2\mid+1$.
In a similar fashion described in the proof of Theorem \ref{p>m+n.eee},
label $(1/2)[mn-n-3m/2]$ edges between $U$ and $V$ and
$((p+1)/2)(m+n+1)/2+((p-1)/2)(m+n-1)/2$ edges between $U\cup V$ and $W$ with $-1$
such that the edges between $V_1$ and $W_1$ are all negative edges.
In addition, every vertex in $U$ is incident with $(n+p-2)/2$ negative edges,
every vertex in $V_1, V_2$ is incident with $(m+p-1)/2$ and $(m+p-3)/2$ negative edges, respectively,
and every vertex in $W_1, W_2$ is incident with $(m+n+1)/2$ and $(m+n-1)/2$ negative edges, respectively.
Label the remaining edges of $K_{n,m,p}$ by $+1$.
Then the weight of vertices in $U$ are all $2$,
the weight of vertices in $V_1, V_2$ are all $1,3$, respectively,
and the weight of vertices in $W_1$ are all $-1$ and in $W_2$ are $+1$.
Hence, this labeling defines a SEDF $f$ with $w(f)=(3m+2n-2)/2$.

Note that since the weight of some vertices in $W$ is $-1$, no vertices in $U\cup V$ can
have weight zero or $-1$ by (\ref{eq1}).
Now by Lemma \ref{oeo.eoo} and the facts that $f(W)=-1$, $f(u)=2$
for every $u\in U$, $f(v)=1$ for every $v\in V_1$ and $f(v)=3$ for every $v\in V_2$, it follows that $\gamma_s'(K_{m,n,p})=(3m+2n-2)/2$.
\end{proof}

\section{The SEDNs of $K_{1,n,p}$ and $K_{2,2,p}$}\label{SEC5}
The constructions given in Section \ref{SEC4} work if the sum of the desired negative edges at vertices in $U$ and at vertices in $V$ is not less than the desired negative edges at vertices in $W$.
In this section we calculate the signed edge domination numbers of $K_{1,n,p}$ and $K_{2,2,p}$
which are not covered by constructions given in Section \ref{SEC4}.

\begin{lemma}\label{K1,n,p}
{\rm
Let $n\geq 1$ and $p\geq n+1$.
\begin{enumerate}
\item If $n,p$ are odd, then $\gamma_s'(K_{1,n,p})=n+2$.
\item If $n,p$ are even, then $\gamma_s'(K_{1,n,p})=n+2$.
\item If $n$ is even and $p$ is odd, then $\gamma_s'(K_{1,n,p})=2n+1$.
\item If $n$ is odd and $p$ is even, then $\gamma_s'(K_{1,n,p})=2n+3$.
\end{enumerate}
}
\end{lemma}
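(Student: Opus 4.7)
The plan is to treat the four parity cases separately. In each case I construct an explicit SEDF of $K_{1,n,p}$ of the stated weight (upper bound) and then derive a matching lower bound from the structural results of Section~\ref{SEC3}. Let $U=\{u\}$, $V=\{v_1,\ldots,v_n\}$ and $W=\{w_1,\ldots,w_p\}$, so that $\deg u=n+p$, $\deg v_i=1+p$ and $\deg w_j=1+n$; the parity of each vertex weight is thus determined by the parities of $n$ and $p$. Throughout I will use the identity $2w(f)=f(u)+\sum_i f(v_i)+\sum_j f(w_j)$, the inequality $f(x)+f(y)\geq 0$ for every edge $xy$ from~(\ref{eq1}), and the forcing rule that $f(x)+f(y)\in\{0,1\}$ implies $f(xy)=-1$.

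For the upper bound I aim at target vertex weights of smallest feasible absolute value, pushed to $\{-1,0,1,2,3\}$ according to parity. In Case~1 ($n,p$ odd, so all weights even) I take $f(u)=f(v_i)=2$ and $f(w_j)=0$, with a single vertex shifted by~$2$ to balance the parity of $|E|=n+p+np$. In Case~2 ($n,p$ even, so $f(v_i)$ and $f(w_j)$ are odd) the analogous targets give $f(u)=2$, $f(v_i)=1$ and $f(w_j)\in\{-1,1\}$. In Cases~3 and~4 the mixed parities of $n$ and $p$ prevent all $f(w_j)$ from being simultaneously as small as in Cases~1 and~2, and the smallest viable configuration has $f(v_i)\in\{1,3\}$ (or $\{2,4\}$, depending on the case), yielding $w(f)=2n+1$ and $2n+3$ respectively. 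Given the target weights, the forcing rule prescribes $f(xy)=-1$ on every edge with $f(x)+f(y)\in\{0,1\}$, and the remaining edges are labeled by a round-robin assignment analogous to the one in the proof of Theorem~\ref{p>m+n.eee} that equalizes the number of negative edges at each vertex of each partite set.

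For the lower bound I apply Lemmas~\ref{eee.ooo}--\ref{oeo.eoo} (as $W$ is the unique largest partite set, since $p\geq n+1>n\geq 1=m$) to obtain $f(w_j)\geq 0$ in Cases~1 and~4 and $f(w_j)\geq -1$ in Cases~2 and~3. Corollary~\ref{equal_minus_multipartite} further restricts the $f(w_j)$ to at most two values of the same parity differing by~$2$. The edge inequalities $f(u)+f(w_j)\geq 0$ and $f(v_i)+f(w_j)\geq 0$ then feed these bounds into $2w(f)=f(u)+\sum_i f(v_i)+\sum_j f(w_j)$. A case split on whether the smaller value of $f(w_j)$ attains the lemma minimum, combined with the parity of $|E|$, delivers the stated bounds.

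The main obstacle lies in Cases~3 and~4, where the claimed bound is independent of $p$ but the naive inequalities only yield something that grows with $p$. The key is a quantitative use of the forcing rule: if $f(w_{j_0})$ attains the minimum value permitted by the relevant lemma, then the edge $uw_{j_0}$, and (when the parities conspire) every edge $v_iw_{j_0}$ as well, is forced to $-1$. Counting these forced negative edges against the negative-edge budget at $u$, which is $(\deg u-f(u))/2$, shows that either $f(u)$ must be large or only a bounded number of $w_j$ can attain the minimum, so that $\sum_j f(w_j)$ itself grows. Balancing these two contributions, together with the sharpening $f(v_i)+f(w_j)\geq 1$ when the parities make this sum odd, produces the exact constants $2n+1$ and $2n+3$. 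The same mechanism operates in Cases~1 and~2, where it forbids the ``all-zero'' configuration $f(u)=f(v_i)=f(w_j)=0$ and yields the value $n+2$.
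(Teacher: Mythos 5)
There are two genuine gaps. First, your Case~2 target weights are infeasible: with $f(u)=2$, $f(v_i)=1$ for \emph{every} $i$, and some $f(w_j)=-1$, the forcing rule makes all $n+1$ edges at that $w_j$ negative (each sum $f(x)+f(w_j)$ equals $0$ or $1$), so $f(w_j)=-(n+1)\neq -1$. The paper's construction avoids this by splitting $V$ into vertices of weight $1$ and weight $3$ (and $W$ into weights $-1$, $+1$ and one vertex of weight $3$), so that a weight-$(-1)$ vertex of $W$ has exactly as many forced negative edges, namely $(n+2)/2$, as its weight allows. This is repairable.

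The irreparable gap is the lower bound, precisely the step you leave at ``balancing these two contributions produces the exact constants.'' Your mechanism (Section~3 lemmas, Corollary~\ref{equal_minus_multipartite}, forced edges counted against negative-edge budgets) never touches the configuration $f(u)=1$, $f(v_i)=2$ for all $i$, $f(w_j)=1$ for all $j$ in Case~3: every edge sum is at least $2$, so nothing is forced, all your constraints are satisfied, and $2w(f)=1+2n+p$, which is below $2(2n+1)$ whenever $p<2n+1$. This is not a phantom: for $K_{1,4,5}$ it is realizable (take $uv_1,uw_1,uw_2,uw_3$ negative, $v_1w_4$, $v_2w_1$, $v_2w_5$, $v_3w_2$, $v_3w_5$, $v_4w_3$, $v_4w_4$ negative, the rest positive) and gives an SEDF of weight $7$ against the claimed $2n+1=9$; Theorem~A of Part One also gives $\gamma_s'(K_{1,4,5})=7$. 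The analogous configuration $f(u)=f(v_i)=1$, $f(w_j)=2$ defeats Case~4 (e.g.\ $K_{1,5,6}$ has weight $9$, not $13$, again matching Part One), and the failure persists for some $p$ strictly beyond $m+n$ (e.g.\ $K_{1,5,8}$ admits weight $11<13$). So no counting of the kind you describe can yield constants independent of $p$ on the whole range $p\geq n+1$; a correct treatment must restrict $p$ or analyze these low-weight configurations directly. For what it is worth, the paper's own proof does not do this either: it only exhibits the constructions and asserts that minimality ``is easy to see,'' so your attempt founders exactly where the paper is silent.
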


\begin{proof}
Consider the graph $K_{1,n,p}$ whose partite sets are $U,V$ and $W$.

\noindent {\bf Case 1.}\quad $n$ and $p$ are odd. \\
Since $mn-m-n=n-1-n=-1$ when $m=1$, the construction given in Theorem \ref{p>=m+n+1.ooo}
does not work.
On the other hand,
$$m(n+p-2)/2+n(m+p-2)/2-(p-1)(m+n)/2-(m+n-2)/2=0$$
when $m=1$.
So we can label $(n+p-2)/2+n(1+p-2)/2$ edges between $U\cup V$ and $W$ such that
the vertex in $U$ is incident with $(n+p-2)/2$ negative edges, each vertex in $V$ is incident with
$(p-1)/2$ negative edges and all the vertices in $W$ are incident with $(n+1)/2$
negative edges except one vertex which is incident with $(n-1)/2$ negative edges.
We label the remaining edges with $+1$. This yields a signed edge dominating function of weight
$n+2$. It is easy to see that this is the minimum weight of a SEDF of $K_{1,n,p}$ when $n,p$ are odd.
\vspace{3mm}

\noindent {\bf Case 2.}\quad $n$ and $p$ are even. \\
Since $mn-n-(3m+1)/2=-2$ when $m=1$, the construction given in Theorem \ref{p>=m+n+1.oee}
does not work. On the other hand,
$$\begin{array}{l}
m(n+p-2)/2+((n-m+1)/2)(m+p-1)/2\\+((n+m-1)/2)(m+p-3)/2-(p/2)(m+n+1)/2\\
-((p-2)/2)(m+n-1)/2-(m+n-3)/2=mn-n-(3m-3)/2=0
\end{array}$$
when $m=1$.
So we can label $m(n+p-2)/2+((n-m+1)/2)(m+p-1)/2\\+((n+m-1)/2)(m+p-3)/2$ edges between $U\cup V$ and $W$ such that
the vertex in $U$ is incident with $(n+p-2)/2$ negative edges, half of the vertices in $V$ are incident with
$p/2$ negative edges and the other half are incident with
$(p-2)/2$ negative edges, $p/2$ vertices in $W$ are incident with $(n+2)/2$, $(p-2)/2$ vertices are incident with $n/2$ and one vertex is incident with $(n-2)/2$ negative edges.
We label the remaining edges with $+1$. This yields a signed edge dominating function of weight
$n+2$. It is easy to see that this is the minimum weight of a SEDF of $K_{1,n,p}$ when $n,p$ are even.
\vspace{3mm}

\noindent {\bf Case 3.}\quad $n$ is even and $p$ is odd.\\
If $m=1$, then $mn-m-3n/2<0$, so the constructions given in Theorem \ref{p>=m+n+1.oeo}
do not work. On the other hand,
$$\begin{array}{l}
m(n+p-1)/2+((n-m+1)/2)(m+p-2)/2\\+((n+m-1)/2)(m+p-4)/2-((p-n-1)/2)(m+n+1)/2 \\-((p+n+1)/2)(m+n-1)/2=mn-m-n+1=0
\end{array}$$
when $m=1$.
Place $m(n+p-1)/2+((n-m+1)/2)(m+p-2)/2+((n+m-1)/2)(m+p-4)/2$  negative edges
between $U\cup V$ and $W$ such that the vertex in $U$ is incident with $(n+p-1)/2$ negative edges,
$(n-m+1)/2$ vertices in $V$ are incident with $(m+p-2)/2$ negative edges, $(n+m-1)$/2 vertices are incident
with $(m+p-4)/2$ negative edges, $(p-n-1)/2$ vertices of $W$ are incident with (m+n+1)/2 negative edges and
$(p+n+1)/2$ vertices of $W$ are incident with (m+n-1)/2 negative edges.
Label the remaining edges with $+1$. This yields a signed edge dominating function of weight $2n+m$ which is
$2n+1$ when $m=1$.
It is an easy to see that $\gamma(K_{1,n,p})=2n+1$.
\vspace{3mm}

\noindent {\bf Case 4.}\quad $n$ is odd and $p$ is even.\\
If $m=1$, then $mn-(3m+3n)/2<0$, so the constructions given in Theorem \ref{p>m+n.ooe}
do not work. On the other hand,
$$\begin{array}{l}
m(n+p - 3)/2 + n (m + p - 3)/2 -(p-(n+3)/2) (m + n)/2\\ -((n+3)/2)(m+n-2)/2=(2mn-3m-2n+3)/2=0
\end{array}$$
when $m=1$.
So we can find a signed edge dominating function of weight
$(1/2)[3+3n+2(n+3)/2]=2n+3$. It is easy to verify that $\gamma_s'(K_{1,n,p})=2n+3$.
\end{proof}

In \cite{Xu1} it was conjectured that $\gamma_s'(G)\leq |V(G)|-1$ for every graph $G$ of order at least 2.
Note that if $n$ is odd, then $\gamma_s'(K_{1,n,n+3})=2n+3$ by Lemma \ref{K1,n,p}, Part 4.
Hence, the graph $K_{1,n,n+3}$ achieves the upper bound in this conjecture.

\begin{lemma}\label{K2,2,p}
{\rm
Let $p\geq 5$ be odd. Then
$\gamma_s'(K_{2,2,p})= 8$.
}
\end{lemma}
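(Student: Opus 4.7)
The plan is to prove both directions $\gamma_s'(K_{2,2,p}) \leq 8$ and $\gamma_s'(K_{2,2,p}) \geq 8$ separately, since this case is excluded from the constructions of Section \ref{SEC4}.

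For the upper bound I would exhibit an explicit SEDF $f$ of weight $8$ in the style of the constructions of Section \ref{SEC4}. Set $f(u_i) = f(v_j) = 3$ for $i, j \in \{1,2\}$, designate two vertices of $W$ (say $w_{p-1}$ and $w_p$) to have vertex-weight $2$, and give the remaining $p-2$ vertices of $W$ weight $0$. This corresponds to $x_{u_i} = x_{v_j} = (p-1)/2$ and demands $N_1 = 0$, $N_2 = N_3 = p-1$ negative edges in the three bipartite pieces. I would label all four $UV$-edges $+1$; at each non-special $w_k$ place exactly one negative $U$-edge and one negative $V$-edge; at each special $w_k$ place exactly one negative edge, distributed so that every $u_i$ (resp.\ every $v_j$) gathers exactly $(p-1)/2$ negative $W$-edges. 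A direct check using (\ref{eq1}) verifies $f[e] \geq 1$ on every edge (the vertex-weights $f(u)+f(v) \geq 6$ on $UV$-edges and $\geq 3$ on the remaining edges make this immediate), and the total weight is $(4\cdot 3 + 2\cdot 2)/2 = 8$.

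For the lower bound, let $f$ be any SEDF and set $A = f(u_1)+f(u_2)$, $B = f(v_1)+f(v_2)$, $C = \sum_k f(w_k)$, so that $w(f) = (A+B+C)/2$. Because $\deg(u_i) = \deg(v_j) = p+2$ is odd and $\deg(w_k) = 4$ is even, each $f(u_i)$ and $f(v_j)$ is odd while each $f(w_k)$ is even; thus $A, B, C$ are all even. The number of negative $UV$-edges is $N_1 = 2 + (C - A - B)/4$, so realizability forces $A + B + C \equiv 0 \pmod 4$, which makes $w(f)$ even. It therefore suffices to exclude $w(f) \in \{2, 4, 6\}$.

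To rule these out, I would normalize $f$ via Corollary \ref{equal_minus_multipartite} so that $|f(u_1)-f(u_2)| \leq 2$, $|f(v_1)-f(v_2)| \leq 2$, and the values $f(w_k)$ take at most two consecutive even values, and set $a = \min_i f(u_i)$, $b = \min_j f(v_j)$, $c = \min_k f(w_k)$. The basic edge inequalities (\ref{eq1}) give $a + b \geq 0$, $a + c \geq 1$, $b + c \geq 1$; moreover, whenever $f(u_i) + f(w_k) = 1$ or $f(v_j) + f(w_k) = 1$, the corresponding edge is forced to be $-1$. I would then case-split on the small values of $a, b$ compatible with $A+B+C \leq 12$, together with the admissible multisets of $f(w_k)$ values summing to $C$. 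In every sub-case, either the formulas $N_2 = p + (B-A-C)/4$, $N_3 = p + (A-B-C)/4$ come out negative or non-integral, or the edges forced negative by the tight condition $f(u_i)+f(w_k)=1$ (and likewise on the $V$-side) accumulate more negative incidences at some vertex $u_1$ or $v_1$ than the bound $x_u = (p+2-f(u))/2$ allows; this is where the hypothesis $p \geq 5$ enters decisively. The main obstacle is bookkeeping: organizing the case analysis tightly enough that every combination of $(A, B, C)$ with $A+B+C \in \{4, 8, 12\}$ is excluded by one of these two mechanisms.
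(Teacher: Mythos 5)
Your weight-$8$ construction is correct and is essentially the paper's entire proof: the paper likewise only exhibits a weight-$8$ SEDF (concentrating the surplus on a single vertex of $W$ of weight $4$ rather than on two vertices of weight $2$) and then simply asserts the equality, so your attempt to actually prove the lower bound goes beyond what the paper records. Your framework for that lower bound (parity of the vertex weights, the formulas for $N_1,N_2,N_3$, and the edges forced negative when $f(x)+f(y)\in\{0,1\}$) is also sound as far as it goes.

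The genuine gap is that the case analysis you defer as ``bookkeeping'' does not close when $p=5$. Consider $A=B=C=4$ realized by $f(u_1)=f(v_1)=1$, $f(u_2)=f(v_2)=3$, $f(w_1)=f(w_2)=f(w_3)=0$, $f(w_4)=f(w_5)=2$. Then $N_1=1$ and $N_2=N_3=4$ are nonnegative integers; each weight-$0$ vertex of $W$ carries exactly the two forced negative edges (to $u_1$ and to $v_1$) that its quota of $2$ permits; and each of $u_1,v_1$ carries exactly the three forced negative edges that its quota $(p+1)/2=3$ permits. Neither of your exclusion mechanisms fires, and the configuration is in fact realizable: declare negative the nine edges $u_1w_k,\,v_1w_k$ ($k=1,2,3$), $u_2v_2$, $u_2w_4$, $v_2w_5$, and positive the remaining fifteen. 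Checking $f[xy]=f(x)+f(y)-f(xy)\ge 1$ on all $24$ edges (the tightest is $f[u_1v_1]=1+1-1=1$) shows this is an SEDF of $K_{2,2,5}$ of weight $6$. Hence $\gamma_s'(K_{2,2,5})\le 6$, the lemma as stated is false at $p=5$, and no completion of your plan (or of the paper's) can prove it there. For $p\ge 7$ your two mechanisms do appear to suffice: in the main case where the $W$-weights lie in $\{0,2\}$ with $j$ twos, a weight-$1$ vertex of $U\cup V$ needs at least $p-j$ forced negative edges against a quota of $(p+1)/2$, while each weight-$0$ vertex of $W$ admits at most $2$ weight-$1$ neighbours in $U\cup V$; together these give $w(f)\ge 4+(p-1)/2$, which yields $8$ for $p\ge 7$ but only $6$ for $p=5$. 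The lemma should be restricted to $p\ge 7$, with $\gamma_s'(K_{2,2,5})=6$ recorded as a separate exception.
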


\begin{proof}
Consider the graph $K_{m,n,p}$ with partite sets $U$, $V$ and $W$.
Partition $W$ into $W_1, W_2$ and $W_3$ such that $|W_1\mid=|W_2\mid$ and
$|W_3|=1$. Label the edges between $U$ and $W_1$ and between $V$ and $W_2$ with $-1$
and the remaining edges with $+1$. Then the weight of vertices in $W_1\cup W_2$ are zero and
the weight of the vertex in $W_3$ is $4$. The weight of the vertices in $U\cup V$ are all 3. This leads to $\gamma_s'(K_{2,2,p})=8$.
\end{proof}

\section{Main Theorem}\label{SEC6}
Let $m,n,p$ be positive integers, $m\leq n$ and $p\geq m+n$.
In this section we state the main theorem of this paper.
This result together with the main result of \cite{KG} provide
the signed edge domination number of $K_{m,n,p}$ for all positive integers $m,n$ and $p$.

\vspace{5mm}

\noindent {\bf Main Theorem}\quad
Let $m,n$ and $p$ be positive integers, $m\leq n$ and
$p \geq m+n$. Let $m\geq 2$ and if $p$ is odd, $(m,n)\neq (2,2)$.
\begin{itemize}
\item[{\bf A.}] If $m,n$ and $p$ are even, 
then $\gamma_s'(K_{m,n,p})=m+n$.

\item[{\bf B.}]
If $m, n$ and $p$ are odd and $m,n\geq 3$, 
then $\gamma_s'(K_{m,n,p})=m+n+1$.

\item[{\bf C.}]
If $m, n\geq 3$ are odd and $p$ is even, 
then
\begin{enumerate}
\item $\gamma_s'(K_{m,n,p})=\dfrac{3m+3n+2}{2}$ if $m+n\equiv 0 \pmod 4$,

\item $\gamma_s'(K_{m,n,p})=\dfrac{3m+3n}{2}$ if $m+n\equiv 2 \pmod 4$.
\end{enumerate}

\item[{\bf D.}]
If $m, n$ are even, $p$ is odd and $(m,n)\neq (2,2)$, 
then
\begin{enumerate}
\item $\gamma_s'(K_{m,n,p})=\dfrac{3m+3n}{2}$ if $m+n\equiv 0 \pmod 4$,

\item $\gamma_s'(K_{m,n,p})=\dfrac{3m+3n+2}{2}$ if $m+n\equiv 2\pmod 4$.
\end{enumerate}

\item[{\bf E.}]
If $m$ is odd, $n,p$ are even and $3\leq m<n$, 
then
\begin{enumerate}
\item $\gamma_s'(K_{m,n,p}) =\dfrac{3m+2n+1}{2}$ if $m\equiv 1\pmod 4$,

\item $\gamma_s'(K_{m,n,p}) =\dfrac{3m+2n-1}{2}$ if $m\equiv 3\pmod 4$.
\end{enumerate}

\item[{\bf F.}]
If $m,p$ are even, $n$ is odd and $m<n$, 
then
\begin{enumerate}
\item $\gamma_s'(K_{m,n,p})=\dfrac{2m+3n+1}{2}$ if $n\equiv 1\pmod 4$,
\item $\gamma_s'(K_{m,n,p})=\dfrac{2m+3n-1}{2}$ if $n\equiv 3\pmod 4$.
\end{enumerate}

\item[{\bf G.}]
If $m,p$ are odd, $n$ is even and $3\leq m<n$, 
then
\begin{enumerate}
\item $\gamma_s'(K_{m,n,p})=\dfrac{2m+3n}{2}$ if $n\equiv 0 \pmod 4$,

\item $\gamma_s'(K_{m,n,p})=\dfrac{2m+3n-2}{2}$ if $n\equiv 2 \pmod 4$.
\end{enumerate}

\item[{\bf H.}] If $m$ is even, $n,p$ are odd and $m<n$, 
then
\begin{enumerate}
\item $\gamma_s'(K_{m,n,p})=\dfrac{3m+2n}{2}$ if $m\equiv 0 \pmod 4$,

\item $\gamma_s'(K_{m,n,p})=\dfrac{3m+2n-2}{2}$ if $m\equiv 2 \pmod 4$.
\end{enumerate}
\end{itemize}

In addition, $\gamma_s'(K_{1,n,p})=n+2$ if $n,p$ are both odd or both even,
$\gamma_s'(K_{1,n,p})=2n+1$ if $n$ is even and $p$ is odd,
$\gamma_s'(K_{1,n,p})=2n+3$ if $n$ is odd and $p$ is even, and $\gamma_s'(K_{2,2,p})=8$ if $p$ is odd.


\end{document}